\documentclass[12pt,reqno, a4paper]{amsart}

\usepackage[utf8]{inputenc}
\usepackage[margin=3.1cm]{geometry}
\usepackage{graphicx}
\usepackage{todonotes} 
\usepackage{subcaption}
\usepackage{mathtools}
\usepackage{hyperref}
\usepackage{MnSymbol}
\usepackage{wrapfig}

\usepackage{subcaption}




\usepackage{amsmath, amsthm}
\usepackage{esint}
\usepackage{verbatim}
\usepackage{dsfont}
\usepackage{enumitem}
\usepackage{helvet}
\numberwithin{equation}{section}
\usepackage[normalem]{ulem}



\DeclareMathOperator{\dist}{dist}

\DeclareMathOperator{\dive}{div}

\DeclareMathOperator{\supp}{supp}

\renewcommand{\hat}{\widehat}
\renewcommand{\tilde}{\widetilde}
\newcommand{\Lip}{\mathrm{Lip}}

\newcommand{\ip}[1]{\langle {#1}\rangle}

\newcommand{\Lm}{\mathcal{L}} 
\newcommand{\Hm}{\mathcal{H}} 
\newcommand{\N}{\mathbb{N}} 
\newcommand{\Z}{\mathbb{Z}} 
\newcommand{\R}{\mathbb{R}} 
\newcommand{\calA}{\mathcal{A}}
\newcommand{\calAo}{\mathcal{A}_\omega}
\newcommand{\calAoe}{\mathcal{A}_{\omega,\eps}}
\newcommand{\calAe}{\mathcal{A}_{\eps}}
\newcommand{\cX}{\mathcal X}
\newcommand{\cXo}{\mathcal X_\omega}
\newcommand{\cXoe}{{\mathcal X_{\omega,\eps}}}
\newcommand{\cXe}{{\mathcal X_{\eps}}}
\newcommand{\cE}{\mathcal E}
\newcommand{\cEo}{\mathcal E_\omega}
\newcommand{\cEoe}{\mathcal E_{\omega,\eps}}
\newcommand{\cEe}{\mathcal E_{\eps}}

\newcommand{\del}{\partial}

\newcommand{\M}{\mathcal{M}}

\newcommand{\weakstar}{\stackrel{*}{\rightharpoonup}}
\newcommand{\eps}{\varepsilon}


%

\newcommand{\KR}{\mathrm{KR}}

\newtheorem{theorem}{Theorem}[section]
\newtheorem{prop}[theorem]{Proposition}

\newtheorem{lemma}[theorem]{Lemma}
\newtheorem{defi}[theorem]{Definition}
\newtheorem{cor}[theorem]{Corollary}
\newtheorem{example}[theorem]{Example}
\newtheorem{assumption}[theorem]{Assumption}

\theoremstyle{remark}
\newtheorem{rem}[theorem]{Remark}

\def\calA{{\mathcal A}}  
  \def\calF{{\mathcal F}}
  
\def\calJ{{\mathcal J}}  
\def\calM{{\mathcal M}}  
\def\calP{{\mathcal P}}




\begin{document}

\title{Stochastic homogenization of dynamical discrete optimal transport}

\author{Peter Gladbach}
\author{Eva Kopfer}
\thanks{The second author gratefully acknowledges support by the German
Research Foundation through the Hausdorff Center for Mathematics and the Collaborative Research Center 1060.}

\address{Institut f\"ur Angewandte Mathematik \\ Rheinische Friedrich-Wilhelms-Universit\"at Bonn \\ Endenicher Allee 60 \\ 53115 Bonn, Germany}
	\email{gladbach@iam.uni-bonn.de} \email{eva.kopfer@iam.uni-bonn.de}

\date{}

\begin{abstract}
    The aim of this paper is to examine the large-scale behavior of dynamical optimal transport on stationary random graphs embedded in $\R^n$. Our central theorem is a stochastic homogenization result that characterizes the effective behavior of the discrete problems in terms of a continuous optimal transport problem, where the homogenized energy density results from the geometry of the discrete graph.
\end{abstract}

	\maketitle

\tableofcontents

\section{Introduction}

Over the past few decades, optimal transport has emerged as a vibrant research area at the intersection of analysis, probability, and geometry. Significant research activity has occurred in both pure mathematics and applied areas \cite{villani2021topics,santambrogio2015optimal,peyre2019computational}. A central focus in this field is the $2$-Kantorovich distance $W_2$, which plays a crucial role in non-smooth geometry \cite{sturm2006geometry,lott2009ricci} and the theory of dissipative PDEs such as the heat, Fokker-Planck, and porous-medium equations \cite{jordan1998variational,otto2001geometry}.

The Benamou–Brenier formula \cite{benamou2000computational} is a pivotal result in continuous settings, demonstrating the equivalence between static and dynamical optimal transport. It asserts that the classical Monge–Kantorovich problem, which minimizes a linear cost functional over couplings of given probability measures $\mu_0$ and $\mu_1$, is equivalent to a dynamical transport problem, where an action functional is minimized over all solutions to the continuity equation connecting $\mu_0$ and $\mu_1$. For an overview of the theory and its applications, we refer to the monographs \cite{villani2021topics,ag,santambrogio2015optimal}.

In discrete settings, however, this equivalence between static and dynamical optimal transport breaks down since the space of probability measures contains no non-trivial curves of finite $W_2$-length. The dynamical optimal transport problem on graphs \cite{maas2011gradient,mielke2011gradient} is essential for gradient flow formulations of dissipative evolution equations, such as the heat equation \cite{maas2011gradient,mielke2011gradient}, porous medium \cite{erbar2012gradient} and McKean-Vlasov \cite{erbar2016gradient}, as well as for discrete Ricci curvature \cite{erbar2012ricci} and functional inequalities \cite{mielke2013geodesic,erbar2018poincare}. Consequently, analyzing the discrete-to-continuum limit of dynamical optimal transport in various settings is a significant problem. This has been addressed in several articles, which we briefly recall here:

 Gigli and Maas show in \cite{gigli2013gromov} the convergence of the discrete action on $\eps\Z^n$ with nearest neighbors to the $W_2$-action.

Gladbach et al.\ show in \cite{gladbach2023homogenisation} the convergence to a homogenized action (not necessarily that of $W_2$) for $\eps \Z^n$-periodic graphs.

 Gladbach, Kopfer, and Maas show in \cite{gladbach2020scaling} that in the finite-volume setting, where points are neighbors if and only their Voronoi cells touch, the discrete action converges to that of $W_2$ if and only if the Voronoi cells satisfy a geometric isotropy condition, which unfortunately happens almost never under random shifts.

On the other hand, Garcia Trillos shows in \cite{garcia2020gromov} the convergence to the $W_2$ action for random point clouds provided that every point is neighbors with all other points in a large enough ball, so that the number of neighbors grows to infinity as the spacing decreases.

Our main result fits in between these: We show the stochastic homogenization of dynamical transport actions on certain random graphs to a continuous anisotropic action. Unlike in \cite{garcia2020gromov} the vertex degrees remain bounded. Unlike in \cite{gladbach2023homogenisation} the graph is not necessarily periodic.

Examples of random graphs are road networks on a landmass \cite{hackl2016generation,sohouenou2020using}, blood vessels forming in a tissue through angiogenesis \cite{cha2005vascular}, and water channels in porous limestone \cite{tang2021fluid}.





Stochastic homogenization is a mathematical theory used to study the macroscopic behavior of differential equations with random spatially heterogeneous coefficients, which are regarded as a realization of a random field. The goal in homogenization is to find an effective homogeneous equation that approximates the behavior of the original PDE with random coefficients at large scales.

In the literature, the random field is often assumed to be stationary (its statistical properties do not change over space) and ergodic (spatial averages converge to ensemble averages) in order to use ergodic theorems, which originated with Birkhoff  \cite{birkhoff1931proof}, and remains an extremely active research field, see e.g. \cite{armstrong2018stochastic,armstrong2016quantitative,berlyand2017two,gloria2015quantification} and references therein.

In the variational setting and specifically in this article, homogenization results can be expressed in terms of $\Gamma$-convergence. The $\Gamma$-limit of a sequence of functionals is the largest asymptotic lower bound.  This notion was introduced by De Giorgi \cite{de1975tipo} as the natural type of convergence for minimization problems. We refer to \cite{braides2002gamma,dal2012introduction} for detailed introductions to $\Gamma$-convergence.

In order to show $\Gamma$-convergence, we employ Kingman's subadditive ergodic theorem \cite{kingman1968ergodic}. The first application by Dal Maso and Modica \cite{dal1986nonlinear} characterizes the homogeneous limit functional of stationary random integral functionals
\[
F_{\omega,\eps}( u) = \int_{\R^n} f_\omega( x/\eps, \nabla u(x))\,dx,
\]
where $f_\omega: \R^n\times \R^n\to \R$ is convex in $\nabla u$ with $p$-growth, $1<p<\infty$.

This result was recently expanded to the nonconvex $p=1$ case by Cagnetti et al. in \cite{cagnetti2022global}, yielding a limit functional which is finite exactly for functions of bounded variation.

Alicandro, Cicalese, and Gloria study in \cite{alicandro2011integral} the $\Gamma$-limit of random discrete gradient functionals on random graphs of the form
\[
F_{\omega,\eps}(u) = \eps^n \sum_{(x,y)\in \cEo} f_\omega\left(x,y, \frac{u(y)-u(x)}{\eps}\right), 
\]
where again $f_\omega: \cEo \times \R \to \R$ has $p$-growth, $1<p<\infty$, and the limit functional is of the same form as in \cite{dal1986nonlinear}. Both \cite{dal1986nonlinear} and \cite{alicandro2011integral} treat functionals acting on gradients, i.e. curl-free vector fields. In the transport setting, gradients are replaced by mass-flux pairs $(\rho,j)$ solving the continuity equation $\partial_t \rho + \dive j = 0$, which can be interpreted as zero space-time divergence.

Gladbach, Maas, and Portinale show in the recent article \cite{gladbach2024stochastic} the stochastic homogenization of flow costs on random graphs of the type
\[
F_{\omega,\eps}(J) = \eps^n \sum_{(x,y)\in \cEo} f_\omega\left(x,y, \frac{J(x,y)}{\eps^{n-1}}\right), 
\]
where $f_\omega : \cEo \times \R \to \R$ has $1$-growth but may be nonconvex, and in fact $J:\cEo \to \R$ is a skew-symmetric function with a divergence constraint. We adapt many of the methods therein to this article. In fact, the most straightforward example of such a cost is the total variation $f_\omega\left(x,y, J\right) = |y-x||J|$. Decomposing $\sum_{y\, :\, (x,y)\in\cEo}J(x,y)$ into its positive and negative part, by the flow decomposition theorem \cite{ford1962flows} we get for each signed measure $m=m_+-m_-$ of total mass $0$
\[
\min\left\{F_{\omega}(J)\, :\, \sum_{y:(x,y)\in\cEo}J(x,y) = m(x)\ \forall x\in \cXo\right\} = W_1^{(\cXo,\cEo)}(m_-, m_+).
\]

By contrast, this article deals not with the graph-based earth-mover's distance $W_1^{(\cXo,\cEo)}$ but the graph-based $W_2$-action, first introduced in \cite{maas2011gradient, mielke2011gradient}, which necessitates the inclusion of a time-variable as well as intermediate mass distributions. Nonetheless, we adapt several of the methods developed in \cite{gladbach2024stochastic} to the time-dependent case.

First, our Assumption \ref{ass: graph} on the stationary random graph is taken directly from \cite{gladbach2024stochastic}. A similar assumption also occurs in \cite{alicandro2011integral}.

Second, the boundary values in the cell-formula \eqref{eq: rep} are the same as in \cite{gladbach2024stochastic}.

Third, the blow-up procedure with tangent measures used to prove the lower bound in Section \ref{sec: lower} is strongly inspired by the one in \cite{gladbach2024stochastic}, and has a lot of parallels with the one in \cite{cagnetti2022global}.

Neither of the cited results are directly applicable to the discrete dynamic optimal transport problem. The Kantorovich transport cost $\frac{|j|^2}{\rho}$, while convex, has nonstandard growth at infinity, requiring some additional considerations, which we detail below.


\subsection{Main result}
In this paper we consider a random graph $(\cXo,\cEo)_{\omega\in \Omega}$ in $\R^n$ with random weights $\sigma_\omega\colon \cEo\to [\lambda,\Lambda]$, with $0<\lambda<\Lambda$, where $(\Omega,\calF,\mathbb{P})$ is a probability space. We assume that the graph and the weights are stationary with respect to translations in $\Z^n$. The object of interest is the action functional $\calAo(m,J)$ defined for curves of probability measures $m\in W^{1,1}((0,T);\calP(\cXo))$ and curves of skew-symmetric vector fields $J\in L^1((0,T);\R_a^{\cEo})$ satisfying the discrete continuity equation
\begin{align*}
    \del_tm_t(x)+\sum_{y\sim x}J_t(x,y)=0 \quad \forall x\in\cXo.
\end{align*}
This action functional is given by
\begin{align*}
\calAo(m,J)=\int_0^TF_\omega(m_t,J_t)\, dt,
\end{align*}
where $F_\omega$ is the discrete kinetic energy
\begin{align*}
F_\omega(m_t,J_t)=\sum_{(x,y)\in\cEo}\sigma_\omega(x,y)\frac{|x-y|^2|J_t(x,y)|^2}{\theta^\omega_{xy}(m_t(x),m_t(y))}.
\end{align*}
Here, $\theta^\omega_{xy}$ is a family of random mean functions. We refer to Definitions \ref{defi: mean} and \ref{defi: action} for precise formulations.
The functional under consideration is the discrete version of the action used in the Benamou-Brenier formula of the $L^2$-Kantorovich distance \cite{benamou2000computational} and variants of this have been studied in e.g. \cite{maas2011gradient,erbar2012ricci,gigli2013gromov,gladbach2020scaling,gladbach2023homogenisation,mielke2011gradient}. We want to understand the effective behavior by means of stochastic homogenization. In order to do so, we consider the rescaled random graph $(\cXoe,\cEoe)=(\eps\cXo,\eps\cEo)$.
The rescaled action functional $\calAoe$ is then obtained through
\begin{align*} 
\calAoe(m,J) = \eps^2 \calAo(m(\cdot/\eps),J(\cdot/\eps)),
\end{align*}
since $|x-y|^2$ scales as $\eps^2$. We refer to Section \ref{sec: rescaled graph} for details.

We will show that under Assumption \ref{ass: graph}, the rescaled action functionals $\calAoe$ $\Gamma$-converge to the homogenized action functional $A_\omega$, as $\eps\to0$.

The homogenized action $A_\omega$ is defined on finite Radon measures in space-time $(\rho,j)\in\M((0,T)\times \R^n;[0,\infty)\times \R^n)$ with finite action, see Definition \ref{defi: finite action}, which can be disintegrated into curves of measures in space: 
\begin{align*}
    A_\omega(\rho,j)=\int_0^T\int_{\R^n}f_\omega\left(\frac{dj_t}{d\rho_t}\right)\, d\rho_t\, dt.
\end{align*}
Here, the homogenized energy density $f_\omega : \R^n \to [0,\infty)$ is explicitly given by the cell formula \eqref{eq: cell}, and is $2$-homogeneous and convex. The Kantorovich action corresponds to the special isotropic case $f_\omega(v)=|v|^2$ by the Benamou-Brenier formula.

In order to get convergence of our rescaled action functionals, we  require additional assumptions on the random graph, which are summarized in Assumption \ref{ass: graph}. 


The main result reads as follows. Note that we embed measures $m$ on a graph $\cX$ into measures on $\R^n$ in the natural way and denote by $\iota J$ the embedding of skew-symmetric vector fields $J\colon\cE\to \R$ into vector-valued measures on $\R^n$ made precise in Definition \ref{defi: iota}.

\begin{theorem}\label{thm: main}
Let $(\cXo,\cEo)_{\omega\in\Omega}$ be a random graph in $\R^n$ with random weights $(\sigma_\omega\colon \cEo\to[\lambda,\Lambda])_{\omega\in\Omega}$ and random means $(\theta^\omega)_{\omega\in\Omega}$ with $0<\lambda<\Lambda$, satisfying the graph Assumption \ref{ass: graph} almost surely, such that $(\cXo,\cEo,\sigma_\omega,\theta_\omega)$ is stationary.

For every $\eps>0$ let $(\cXoe,\cEoe)$ be the rescaled random graph and $\calAoe$ be the rescaled action functional defined in Section \ref{sec: rescaled graph}. Let $A_\omega$ be the homogenized action functional given in \eqref{eq: hom action}.
Then we have almost surely $\calAoe\stackrel{\Gamma}{\longrightarrow}_{\eps\to 0} A_\omega$ in the sense that the following two conditions hold almost surely:
\begin{enumerate}
    \item Lower bound: For every $m^\eps \weakstar \rho$ narrowly in $\M_+((0,T)\times \R^n)$ and $\iota J^\eps \weakstar j$ narrowly in $\M((0,T)\times \R^n;\R^n)$,
\begin{equation*}
    \liminf_{\eps\to 0}  \calAoe(m^\eps,J^\eps) \geq  A_\omega(\rho,j).
\end{equation*}
    \item Upper bound:
    For every curve $(\rho,j)$ there exist sequences $m^\eps \weakstar \rho$ narrowly in $\M_+((0,T)\times \R^n)$ \text{ and } $\iota J^\eps \weakstar j$  narrowly in $\M((0,T)\times \R^n;\R^n)$ such that
\begin{equation*}
    \limsup_{\eps\to 0}  \calAoe(m^\eps,J^\eps) \leq  A_\omega(\rho,j).
\end{equation*}
In addition, $\max_{t\in[0,1]}W_2(m^\eps_t,\rho_t) \to 0$.
\end{enumerate}

\end{theorem}


     We emphasize that in this article, we allow for nonoptimal fluxes $j$. 

Since we also show compactness of bounded action sequences in Section \ref{sec: compactness}, we obtain the following corollary, which we prove in Section \ref{sec: proofc}.

\begin{cor}\label{cor: main}
    Let $\rho_0,\rho_1 \in \calP(\R^n)$ with $W_2(\rho_0,\rho_1)<\infty$. Let  $\bar m^\eps_0, \bar m^\eps_1\in \calP(\cXoe)$ with $W_2(\bar m^\eps_0, \rho_0) \to 0$, $W_2(\bar m^\eps_1, \rho_1) \to 0$.
        
     Let $(m^\eps, J^\eps)$ be  minimizers of the discrete action on $[0,1]$ with $m^\eps_0 = \bar m^\eps_0$, $m^\eps_1 = \bar m^\eps_1$. Then the curves have convergent subsequences $m^\eps \weakstar \rho$, $\iota J^\eps \weakstar j$, and
    \begin{equation*}
        C_\omega(\rho_0,\rho_1) = A_\omega(\rho,j) = \lim_{\eps \to 0} \calAoe(m^\eps, J^\eps).
    \end{equation*}

    Here the homogenized Wasserstein distance $C_\omega$ can be characterized as either the minimal action
    \begin{equation*}
        C_\omega(\rho_0,\rho_1)= \min_{(\rho,j)}A_\omega(\rho,j)
    \end{equation*}
    or as the minimum Monge-Kantorovich cost
    \begin{equation*}
    C_\omega(\rho_0,\rho_1)= \min_{\gamma\in \mathrm{Cpl}(\rho_0,\rho_1)}\langle \gamma, f_\omega(y-x)\rangle,
    \end{equation*}
     where $\mathrm{Cpl}(\rho_0,\rho_1)$ is the set of all couplings of $\rho_0$ and $\rho_1$, i.e. all measures $\gamma\in \calP(\R^n\times\R^n)$ with marginals $\rho_0$ and $\rho_1$.
 
\end{cor}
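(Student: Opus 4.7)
The corollary is a standard consequence of $\Gamma$-convergence plus compactness, once one also verifies a boundary-matching construction and performs a Benamou--Brenier-type argument to identify the minimum action with the Monge--Kantorovich cost.

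First I would show that the discrete action minimizers have bounded action uniformly in $\eps$. For this, pick any admissible continuous pair $(\bar\rho,\bar j)$ connecting $\rho_0$ to $\rho_1$ with $A_\omega(\bar\rho,\bar j)<\infty$ (for instance the one obtained by the straight-line construction below), apply the $\Gamma$-limsup part of Theorem \ref{thm: main} together with a short interpolation at $t=0,1$ matching the prescribed discrete endpoints $m^\eps_0$, $m^\eps_1$ at vanishing action cost. Minimality of $(m^\eps,J^\eps)$ then gives
\begin{equation*}
    \calAoe(m^\eps,J^\eps)\leq \calAoe(\bar m^\eps,\bar J^\eps)\to A_\omega(\bar\rho,\bar j)<\infty.
\end{equation*}
The compactness result of Section \ref{sec: compactness} yields subsequences with $m^\eps\weakstar \rho$, $J^\eps\weakstar j$ narrowly, and the narrow continuity equation together with boundedness of the action passes the endpoint conditions to the limit, so $\rho|_{t=0}=\rho_0$, $\rho|_{t=1}=\rho_1$.

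The $\Gamma$-liminf inequality from Theorem \ref{thm: main} gives $A_\omega(\rho,j)\leq \liminf_{\eps\to 0} \calAoe(m^\eps,J^\eps)$. Conversely, for any admissible $(\tilde\rho,\tilde j)$ joining $\rho_0$ to $\rho_1$, the same boundary-matched recovery sequence construction combined with minimality of $(m^\eps,J^\eps)$ yields
\begin{equation*}
    \limsup_{\eps\to 0} \calAoe(m^\eps,J^\eps)\leq A_\omega(\tilde\rho,\tilde j).
\end{equation*}
Taking the infimum over $(\tilde\rho,\tilde j)$ and combining with the liminf bound forces equality: $(\rho,j)$ is a minimizer and
\begin{equation*}
    A_\omega(\rho,j)=\min A_\omega =:W^2_{2,\omega}(\rho_0,\rho_1)=\lim_{\eps\to 0}\calAoe(m^\eps,J^\eps).
\end{equation*}

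For the Monge--Kantorovich characterization, I exploit that $f_\omega$ is convex and $2$-homogeneous, as stated after the definition of $A_\omega$. Given a coupling $\gamma\in\mathrm{Cpl}(\rho_0,\rho_1)$, the straight-line interpolation $\rho_t=((1-t)\pi_1+t\pi_2)_\sharp\gamma$ with flux $j_t(\varphi)=\int (y-x)\varphi((1-t)x+ty)\,d\gamma(x,y)$ solves the continuity equation, and Jensen's inequality applied to the convex functional $(\rho,j)\mapsto f_\omega(dj_t/d\rho_t)\rho_t$ gives $A_\omega(\rho,j)\leq \int f_\omega(y-x)\,d\gamma$. The opposite inequality follows from a superposition principle for measure solutions of the continuity equation with finite $f_\omega$-action: disintegrate $(\rho,j)$ along characteristic curves, apply Jensen on each fiber using $2$-homogeneity, and read off a coupling whose static cost does not exceed $A_\omega(\rho,j)$.

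\textbf{Main obstacle.} The delicate step is the boundary-matching used twice above: one must interpolate between the discrete endpoints $m^\eps_0,m^\eps_1$ and a discretization of the continuous curve $(\tilde\rho,\tilde j)$ at times $t=0,1$ while keeping the additional action $o(1)$. Because of the nonstandard $\tfrac{|J|^2}{\theta^\omega_{xy}(m(x),m(y))}$ growth of the integrand and the non-uniform geometry of the random graph, this cannot be done by a naive linear homotopy; instead one typically uses a short time layer of length $\delta\to 0$ on which an auxiliary curve (e.g.\ a flow generated by a regularized velocity field or a convex combination weighted to avoid vacuum) carries the mass from $m^\eps_0$ to the discretized $\tilde\rho_0$, with cost of order $\delta^{-1}\|m^\eps_0-\tilde\rho_0\|^2\to 0$ once $\eps\ll\delta$. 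The rest of the argument is then a clean application of $\Gamma$-convergence machinery.
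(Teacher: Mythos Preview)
The paper does not give a separate proof of this corollary; it is stated immediately after Theorem~\ref{thm: main} as a direct consequence of $\Gamma$-convergence together with the compactness of Section~\ref{sec: compactness}, i.e.\ the standard fundamental theorem of $\Gamma$-convergence. Your outline follows exactly this route and is correct.

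You are in fact more careful than the paper on one point: the endpoint-matching issue you flag as the ``main obstacle'' is real but is not addressed explicitly in the paper. The upper bound construction in Section~\ref{sec: proofs} builds recovery sequences only for smooth compactly supported $(\rho,j)$ and handles short-time interpolations via the a-priori bound \cite[Lemma 3.3]{gladbach2020scaling}, which is precisely the $W_\infty$-based bridging you sketch; extending this to match prescribed discrete endpoints $m^\eps_0,m^\eps_1$ requires the same mechanism. Your Benamou--Brenier identification of $\min A_\omega$ with the Monge--Kantorovich cost for the $2$-homogeneous convex $f_\omega$ is likewise standard and not spelled out in the paper.
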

By Lemma \ref{lemma: f}, the transport cost $C_\omega$ is almost surely comparable to $W_2^2(\rho_0,\rho_1)$.

\section{Preliminaries}
\subsection{Assumption on the graph geometry}
In the following, let $(\cX,\cE)$ be a countably infinite graph in $\R^n$.  We assume the following (cf. \cite[(G1)--(G3)]{gladbach2024stochastic}.

\begin{assumption}\label{ass: graph}
There is a constant $R>0$ such that:
\begin{enumerate}
    \item\label{path} For all $x,y\in\cX$ there exists a path $P$ in $(\cX,\cE)$ connecting $x,y$ with Euclidean length $L(P)\leq R(|x-y|+1)$.
    \item\label{ball} For all $x\in\R^n$, we have $\cX\cap B(x,R)\neq \emptyset$.
    \item\label{edge} The maximum edge length is bounded: $|x-y|\leq R$ for all $(x,y)\in \cE$.
    \item\label{degree} The maximum degree $\max_{x\in \cX} \deg(x)$ is finite.
\end{enumerate}
\end{assumption}

We briefly comment on the different points of this assumption:

The first point ensures that the graph is connected and that the graph distance is equivalent to Euclidean distance.

The second point ensures that the graph is dense enough. In particular, graphs with arbitrarily large holes are excluded.

The third point ensures that the limit action is purely local and no long-range transport persists.

The final point affects the scaling law of the action. By contrast, in \cite{garcia2020gromov}, the minimum vertex degree tends to infinity, allowing much lower actions. See Section \ref{sec: counterexamples} for an extended discussion of the correct scaling law under unbounded vertex degrees. The final point is not necessary in the $W_1$-case studied in \cite{gladbach2024stochastic}.

Assumptions \eqref{edge} and \eqref{degree} can potentially be weakened by introducing appropriate moment bounds on edge lengths and vertex degrees. However, these generalizations are beyond the scope of this article.

\subsection{Action functional}\label{sec: action}

Let $\calP(\cX)$ denote the space of probability measures on $\cX$ and $\R_a^{\cE}$ denote all skew-symmetric vector fields $J\colon \cE\to\R$.
For $m\in\calP(\cX)$ and $J\in\R_a^{\cE}$ we define the 
energy
\begin{equation}\label{eq: energy}
    F( m,J)=\sum_{(x,y)\in \cE}\sigma(x,y)\frac{|x-y|^2| J(x,y)|^2}{\theta_{xy}(m(x),m(y))}.
\end{equation}

Here, $\sigma\colon \cE\to [\lambda,\Lambda]$ are weights with $0<\lambda<\Lambda$ and
$(\theta_{xy})_{(x,y)\in \cE}$ is a family of mean functions which we shall define next.

\begin{defi}\label{defi: mean}
We say that $(\theta_{xy})_{x,y\in \cX}$ is a \emph{family of mean functions} if the following conditions hold:
\begin{enumerate}
        \item For each $x,y\in\cX$, $\theta_{xy}\in C( [0,\infty)\times [0,\infty);[0,\infty))$ is positively $1$-homogeneous, jointly concave, nondecreasing in each variable and normalized (that is $\theta(1,1)=1$).
        \item For each $x,y\in\cX$ and $r,s\geq 0$, $\theta_{xy}(r,s)=\theta_{yx}(s,r)$.
    \end{enumerate}
\end{defi}

\begin{example}
    Examples of mean functions are the arithmetic mean $\theta(r,s)=\frac{r+s}2$, the geometric mean $\theta(r,s)=\sqrt{rs}$, the harmonic mean $\theta(r,s)=\frac{2rs}{r+s}$, the logarithmic mean $\theta(r,s)=\frac{r-s}{\log r-\log s}$, or the minimum $\theta(r,s)=\min\{r,s\}$. These are all symmetric means. All of these means can be reweighted, e.g. $\theta(r,s)=\lambda r+(1-\lambda) s$ for $\lambda\in[0,1]$ for the arithmetic mean.
\end{example}

It will be crucial to work from time to time with the localised energy, that is for $A\subset \R^n$ Borel we define
\begin{align*}
    F( m,J,A)=&\sum_{(x,y)\in \cE}\sigma(x,y)\frac{\Hm^1([x,y]\cap A)}{|x-y|} \frac{|x-y|^2| J(x,y)|^2}{\theta_{xy}(m(x),m(y))}.
\end{align*}
Note that $F(m,J,\cdot)$ is $\sigma$-additive, i.e.\ $F(m,J,\bigcup_{i\in\N } A_i)= \sum_{i=1}^\infty F(m,J,A_i)$ whenever $(A_i)_{i\in\N}$ are pairwise disjoint Borel sets. For $A=\R^n$ we have $F(m,J,\R^n)=F(m,J)$.

\begin{defi}\label{defi: action}
    For $m\in W^{1,1}( (0,T); \calP(\cX))$ and $J\in L^1((0,T); \R_a^{\cE})$ we define
   the action functional
\begin{align*}
    \calA(m,J)=\int_0^T F( m_t,J_t)\, dt
\end{align*}
whenever 
\begin{align}\label{eq: dcont}
    \partial_tm_t(x)+\sum_{y\sim x}J_t(x,y)=0 \quad\text{ for all }x\in\cX,
\end{align}
and infinity otherwise.
\end{defi}

The notion of this action goes back to the independent works \cite{maas2011gradient} and \cite{mielke2011gradient} and has been considered a lot since then, for example in \cite{erbar2012ricci,mielke2013geodesic,erbar2012gradient,erbar2016gradient,garcia2020gromov,gigli2013gromov, erbar2020computation}.

We note that the concavity assumption of each $\theta_{xy}$ yields that 
$(y, s, t) \mapsto \frac{y^2}{\theta_{xy}(s,t)}$ is convex. Together with the linearity of \eqref{eq: dcont} this implies the convexity of $(m,J)\mapsto \calA(m,J)$. For more details, we refer the reader to \cite{erbar2012ricci}, in particular Lemma 2.7 and Corollary 2.8.

\subsection{Continuous embedding}
We embed the probability measures $m\in \calP(\cX)$ into the Borel probability measures on $\R^n$ in the natural way. We also embed the flows $J\in\R^{\cE}_a$ into $\M(\R^n;\R^n)$, i.e. the $\R^n$-valued Radon measures , in the following way:
\begin{defi}\label{defi: iota}
    For $J\in \R^{\cE}_a$ we define $\iota J\in \M(\R^n;\R^n)$ by 
    \begin{align*}
        \iota J=\frac12\sum_{(x,y)\in\cE}J(x,y)\frac{y-x}{|y-x|}\Hm^1|_{[x,y]}. 
    \end{align*}
\end{defi}

Note that we count every edge twice, once as $(x,y)$ and once as $(y,x)$. 
The embedding of a solution to the discrete continuity equation solves the continuity equation:
\begin{lemma}\label{lemma: cont}
    Let $m\in W^{1,1}((0,T);\calP(\cX))$ and $J\in L^1((0,T);\R^{\cE}_a)$ solve \eqref{eq: dcont}. 
    Then $(m,\iota J)$ solves 
    \begin{align}\label{eq: ccont}
    \partial_tm+\dive \iota J=0 \quad \text{ in }\mathcal{D}'((0,T)\times \R^n).
\end{align}
\end{lemma}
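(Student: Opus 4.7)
The plan is to verify the distributional continuity equation by testing against an arbitrary $\varphi\in C_c^\infty((0,T)\times\R^n)$ and showing
\[
I(\varphi):=\int_0^T\int_{\R^n}\partial_t\varphi\,dm_t\,dt + \int_0^T\int_{\R^n}\nabla\varphi\cdot d(\iota J_t)\,dt = 0.
\]
A key preliminary observation, which I would state at the start, is that since $\varphi$ is compactly supported and edges have Euclidean length $\leq R$ by Assumption \ref{ass: graph}(3), only finitely many vertices $x\in\cX$ and edges $(x,y)\in\cE$ meet $\supp(\varphi)$ for each fixed $t$; moreover this finite set can be chosen uniformly in $t$. Combined with the bounded degree (Assumption \ref{ass: graph}(4)), this lets me freely exchange sums and integrals below, with the $L^1$ integrability in $t$ coming from $m\in W^{1,1}$ and $J\in L^1$.

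The first step handles the time-derivative piece. Writing $m_t=\sum_{x\in\cX}m_t(x)\delta_x$ and using the absolute continuity of $t\mapsto m_t(x)$ together with compactly supported $\varphi$, an integration by parts in $t$ gives
\[
\int_0^T\int_{\R^n}\partial_t\varphi\,dm_t\,dt = -\int_0^T\sum_{x\in\cX}\varphi(t,x)\,\partial_t m_t(x)\,dt = \int_0^T\sum_{x\in\cX}\varphi(t,x)\sum_{y\sim x}J_t(x,y)\,dt,
\]
where the last identity uses the discrete continuity equation \eqref{eq: dcont}.

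The second step handles the flux piece. By definition of $\iota J_t$,
\[
\int_{\R^n}\nabla\varphi\cdot d(\iota J_t) = \frac{1}{2}\sum_{(x,y)\in\cE} J_t(x,y)\int_{[x,y]}\nabla\varphi(t,\cdot)\cdot\frac{y-x}{|y-x|}\,d\Hm^1.
\]
The line integral along the oriented segment $[x,y]$ is exactly $\varphi(t,y)-\varphi(t,x)$ by the fundamental theorem of calculus. Using skew-symmetry $J_t(x,y)=-J_t(y,x)$ to unfold the factor of $\tfrac{1}{2}$, the sum collapses to
\[
\frac{1}{2}\sum_{(x,y)\in\cE}J_t(x,y)\bigl(\varphi(t,y)-\varphi(t,x)\bigr) = -\sum_{x\in\cX}\varphi(t,x)\sum_{y\sim x}J_t(x,y).
\]

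Adding the two contributions and integrating in $t$ gives $I(\varphi)=0$, which is exactly \eqref{eq: ccont}. The only real subtlety is the rigorous justification of the rearrangements and of the boundary terms in the time integration by parts; both are controlled by the compact support of $\varphi$ together with Assumption \ref{ass: graph}(3)--(4), which reduce everything to finite sums with $L^1$ integrands in $t$. I do not expect any substantive obstacle beyond this bookkeeping.
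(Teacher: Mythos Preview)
Your proposal is correct and follows essentially the same approach as the paper: test against $\varphi\in C_c^\infty((0,T)\times\R^n)$, integrate by parts in time to bring in the discrete continuity equation, expand $\iota J$ and use the fundamental theorem of calculus along each segment $[x,y]$ to obtain $\varphi(t,y)-\varphi(t,x)$, and then match the two pieces. The paper's proof is more terse and leaves the skew-symmetry collapse and the finiteness/integrability bookkeeping implicit, whereas you spell these out via Assumption~\ref{ass: graph}(3)--(4); this is a welcome addition rather than a deviation.
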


\begin{proof}
    Let $\varphi\in C_c^\infty((0,T)\times \R^n)$. Then by definition
    \begin{align*}
        \langle \partial_t \varphi, m \rangle
=       -\int_0^T \sum_{x\in\cX} \varphi(t,x) \del_t m_t(x)\, dt = \int_0^T \sum_{x\in\cX} \sum_{y\sim x} J_t(x,y)\varphi(t,x)\, dt,
    \end{align*}
    as well as
    \begin{align*}
     \langle \nabla \varphi, \iota J \rangle
=  \int_0^T \frac12\sum_{(x,y)\in\cE}\int_{[x,y]}\nabla \varphi\cdot \frac{y-x}{|y-x|}J_t(x,y)\, d\Hm^1.
    \end{align*}
    The assertion follows then by the fundamental theorem of calculus along the line segment $[x,y]$.
\end{proof}

 We utilize two notions from measure theory:

For a vector-valued Radon measure $\sigma\in \M(\R^n;\R^m)$, we define its \emph{total variation} as the nonnegative Radon measure $|\sigma|\in \M_+(\R^n)$ through
\begin{equation*}
    |\sigma|(A) = \sup\left\{\int_A \phi \cdot d\sigma\,:\,\phi\in C_c(\R^n;\R^m), \|\phi\|_\infty\leq 1\right\}\quad\text{ for }A\subseteq \R^n\text{ Borel.}
\end{equation*}

For a signed Radon measure $\sigma\in \M(\R^n)$ we define the (possibly infinite) \emph{KR-norm} by
$$\|\sigma\|_{\KR}=\sup\langle f, \sigma\rangle,$$    
where the supremum is taken among all $1$-Lipschitz continuous compactly supported functions $f\in C_c(\R^n)$.

\subsection{A-priori bounds}
\begin{lemma}\label{lem: a prioiri}
Suppose \eqref{path} and \eqref{degree} in Assumption \ref{ass: graph} hold. Let $\bar m^\eps_0,\bar m^\eps_1 \in \calP(\cXe)$. Then
\begin{equation}
    c(\lambda,\max_{x\in \cXoe}\deg(x)) W_1^2(\bar m^\eps_0,\bar m^\eps_1) \leq \min_{\substack{m^\eps_0 = \bar m^\eps_0,\\ m^\eps_1 = \bar m^\eps_1}}\calAe(m^\eps,J^\eps)  \leq C(R,\Lambda)(W_2^2(\bar m^\eps_0,\bar m^\eps_1)+\eps^2).
\end{equation}
\end{lemma}

\begin{proof}
We first prove the upper bound: By convexity of $\calAe$, it suffices to prove this for $\bar m^\eps_0=\delta_x$, $\bar m^\eps_1 = \delta_y$. Using Assumption \ref{ass: graph} \eqref{path}, we find a path $P=(x_0,\ldots,x_N)$ in $(\cXe,\cEe)$ with $x_0 = x$, $x_N = y$ of Euclidean length $L(P)\leq R(|x-y|+\eps)$. We construct the curve $(m^\eps, J^\eps)$ by first setting $m^\eps_{t_k} = \delta_{x_k}$, $t_k = \frac{L(x_0,\ldots,x_k)}{L(P)}$, and then using the original two-point construction from \cite{maas2011gradient} in between $t_k$ and $t_{k+1}$. We can calculate explicitly
    \begin{equation}
        \calAe(m^\eps,J^\eps)= \sum_{k=0}^{N-1} \frac{|x_{k+1}-x_k|^2}{t_{k+1}-t_k}\sigma(x_k,x_{k+1}) C(\theta_{x_k,x_{k+1}}) \leq 8\Lambda L(P)^2 ,
    \end{equation}
where we used \cite[Theorem 2.4]{maas2011gradient}, which states that the diameter of the two-point space for a given mean $\theta$ is
\begin{equation}
    C(\theta) = \left(\int_0^1 \frac{ds}{\sqrt{\theta(s,1-s)}}\right)^2 \leq \left(\int_0^1 \frac{ds}{\min(s,1-s)}\right)^2 = 8. 
\end{equation}
The existence of minimizing curves follows from this by the direct method of the calculus of variations.

To prove the lower bound, 
we estimate the $1$-Wasserstein distance between $\bar m^\eps_0$ and $\bar m^\eps_1$: Let $(m^\eps,J^\eps)$ be a curve that satisfies the discrete continuity equation. Then by the Kantorovich-Rubinstein duality \cite[Remark 7.5]{villani2021topics} and the Cauchy-Schwarz inequality
 \begin{equation} \begin{aligned}\label{eq: a priori}
      W_1^2(\bar m_0^\eps,\bar m_1^\eps) =& \|\bar m_1^\eps -\bar m_0^\eps\|_\KR^2\\
      =& \sup\left\{\int_{(0,1)\times \R^n} \phi \,d(\dive \iota J^\eps)\,:\,\|\nabla \phi\|_\infty \leq 1 \right\}^2\\
      =&\sup\left\{\int_{(0,1)\times \R^n} -\nabla \phi \cdot\,d( \iota J^\eps)\,:\,\|\nabla \phi\|_\infty \leq 1 \right\}^2\\
      \leq& (|\iota J^\eps|((0,1)\times \R^n))^2\\
      \leq &\int_0^1\lambda^{-1} \sum_{(x,y)\in\cEe}\theta_{xy}(m^\eps_t(x),m^\eps_t(y))\, dt\, \calAe(m^\eps,J^\eps)\\
      \leq &2\lambda^{-1}\max_{x\in\cXe}\deg(x) \calAe(m^\eps,J^\eps).
  \end{aligned}
  \end{equation}
  Here, we estimate $\theta_{xy}(r,s)\leq r+s$ and note that each $m_t^\eps$ is a probability measure, which implies
  \begin{align*}
\sum_{(x,y)\in\cEe}\theta_{xy}(m^\eps_t(x),m^\eps_t(y))\leq 2\max_{x\in\cXe}\deg(x).
  \end{align*}
Since $(m^\eps,J^\eps)$ is arbitrary, the assertion follows.
\end{proof}

\subsection{Scaling law}\label{sec: counterexamples}

As long as the vertex degree is bounded, Lemma \ref{lem: a prioiri} suggests a finite nonzero limit action.

For a situation with very high vertex degree, consider the graph $\cXe = \eps \Z$, with every vertex connected to all other vertices of distance at most $N\eps$, $N\in\N$. The vertex degree is $2N$ everywhere.

For a typical situation, consider the unit density $m(x)=\eps$ and total unit flux through every point, which can be achieved by setting $J(x,y)= \frac{2}{N(N+1)}\approx N^{-2}$ whenever $x\leq y\leq x+N\eps$, since the number of edges crossing a generic point is $\frac{N(N+1)}{2}$.

In this situation the energy in $[0,1]$ is of order
\begin{align*}
    F(m,J;[0,1]) \approx \frac1\eps \sum_{k=1}^N (\eps k)^2 \frac{N^{-4}}{\eps}\approx N^{-1}.
\end{align*}

If $N$ does not depend on $\eps$, the maximum degree is constant with $\eps$ and we expect a finite nonzero limit action by Lemma \ref{lem: a prioiri}.

By contrast, in \cite{garcia2020gromov}, $\lim_{\eps \to 0} N_\eps = \infty$, and the article's main result \cite[Theorem 1.22]{garcia2020gromov} shows $\Gamma$-convergence (in more general situations) for the differently-scaled
\begin{align*}
N^n_\eps \calAe \stackrel{\Gamma}{\longrightarrow} C A_{W_2}
\end{align*}
to a specific nonzero multiple of the Euclidean Wasserstein action. In particular, the action as we define it here tends to zero.

These two results suggest the ``correct" energy  for unbounded vertex degrees to scale linearly with vertex degree, e.g.
\begin{align*}
G(m,J) = \sum_{(x,y)\in \cE}\sigma(x,y)\frac{|x-y|^2| J(x,y)|^2}{\theta_{xy}\left(\frac{m(x)}{\deg(x)},\frac{m(y))}{\deg(y)}\right)}.
\end{align*}

However, without Assumption \ref{ass: graph} \eqref{degree}, in general the limit of $G$ may be infinite while the limit of $F$ is finite, as the following example shows:

\begin{example}
Start with the line graph $\eps \Z$ with nearest neighbors. Attach to every base vertex $z\in \eps \Z$ a distinct rescaled copy of the complete graph $z+\eps K_N$, so that every base vertex is connected to its two base neighbors and its entire cul-de-sac. The vertex degrees are either $N$ for attached points or $N+2$ for base points.

Since moving mass macroscopically is only possible between base points $z\in \eps \Z$, we must have $|J(z,z+\eps)|\approx 1$ at most base points at most times. However, the average mass per base point in $[0,1]$ is at most $m(z)=\frac{\text{total mass}}{\text{number of base points}}\approx \eps$, so that
\begin{align*}
F_{\eps,N} \approx 1,\quad G_{\eps,N}\approx N.
\end{align*}

\end{example}

The above two examples show that in the case of unbounded vertex degree, the scaling of the action depends not on the number of neighbors but on the number of useful neighbors. Finding a general scaling law for unbounded vertex degrees appears to be an interesting problem which is not addressed in this article.

\subsection{Finite action curves}

In this section, we define finite action curves, which are mass-flux pairs with finite $W_2$ action. See Figure \ref{fig: FA} for an illustration. As a reference see e.g. \cite{ag} for a comprehensive study of the $W_2$-distance and \cite{ags,santambrogio2015optimal,villani2021topics} for the general $W_p$-case.

\begin{defi}\label{defi: finite action}
    Let $(\rho,j)\in \M((0,T)\times \R^n;[0,\infty)\times \R^n)$ be a finite Radon measure. We say that $(\rho,j)$ is a \emph{finite action curve} if
    \begin{enumerate}
    \item $(\rho,j)$ solve the continuity equation 
    \begin{align*}
\del_t\rho+\dive j=0 \text{ in }\mathcal D'((0,T)\times \R^n).
\end{align*}
        \item $j\ll \rho$ and 
        \begin{align*}
            \int_{(0,T)\times \R^n}\left|\frac{dj}{d\rho}\right|^2\, d\rho<\infty;
        \end{align*}
    \end{enumerate}
\end{defi}

\begin{figure}
\centering
\includegraphics{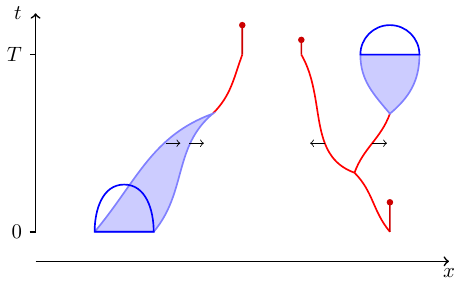}
\caption{A finite action curve $(\rho, j)$. The probability measure $\rho$ has singular (red) and absolutely continuous (blue) parts, with distributions at initial and finite times $\rho_0$, $\rho_T$. The arrows represent the velocities $d j/d \rho$.}\label{fig: FA}
\end{figure}
\begin{rem}
    If $(\rho,j)$ solve the continuity equation we can disintegrate $\rho$ in time, i.e. $\rho=dt\otimes \rho_t$, where $t\mapsto \rho_t$ is a curve of finite Radon measures with constant mass. If additionally $j\ll\rho$, $j$ can also be disintegrated, $j=dt\otimes j_t$ and $\rho_t$ becomes an absolutely continuous curve in the Wasserstein space $W_1$
    \begin{align*}
        W_1(\rho_t,\rho_s)\leq \int_s^t|j_r|(\R^n)\, dr,
    \end{align*}
    by virtue of the Kantorovich-Rubinstein duality \cite[Remark 7.5]{villani2021topics}.
    Since we assume square-integrability of $\frac{dj}{d\rho}$ we even get H\"older continuity \cite[Theorem 2.29]{ag}:
    \begin{align*}
        W_2^2(\rho_t,\rho_s)\leq (t-s)\int_s^t\int_{\R^n} \left|\frac{dj_r}{d\rho_r}\right|^2\, d\rho_r\, dr .
    \end{align*}
    This is an instance of the Benamou-Brenier formula \cite[Proposition 2.30]{ag}.

\end{rem}

\subsection{Compactness}\label{sec: compactness}
\begin{prop}\label{prop: compactness}

Suppose \eqref{edge} and \eqref{degree} in Assumption \ref{ass: graph} hold.    Let \\$m^\eps\in W^{1,1}((0,T);\calP(\cXe))$ and $J^\eps\in L^1((0,T);\R^{\cEe}_a)$ for every $\eps>0$ with
     \begin{align}\label{eq: bounded}
    \sup_{\eps>0}\calAe(m^\eps,J^\eps)<\infty
     \end{align}
     and $m^\eps_0\weakstar \rho_0$ as $\eps\to 0$. Then there exists a subsequence such that $m^\eps\weakstar\rho$ in $\M_+((0,T)\times \R^n)$ and $\iota J^\eps\weakstar j$ in $\M((0,T)\times \R^n;\R^n)$, where
     $(\rho,j)$ is a finite action curve.
     In addition 
     \begin{align*}
        m_t^\eps\weakstar\rho_t 
     \end{align*} for every $t\in[0,T]$ as $\eps\to0$.
\end{prop}

\begin{proof}
We start by proving the narrow convergence of $m^\eps$.
 For this we use the lower bound in the a-priori estimate Lemma \ref{lem: a prioiri} together with a change of variables to get 
 \begin{equation} \begin{aligned}\label{eq: a priori1}
      W_1(m_t^\eps,m_s^\eps)
      \leq &C\sqrt{|t-s|}\sqrt{\calAe(m^\eps,J^\eps)}.
  \end{aligned}
  \end{equation}
  Then, since $m^\eps_0$ is narrowly converging, the set $\{m^\eps_0\}$ is tight by Prokhorov's theorem. Together with \eqref{eq: a priori1}, this implies that the set $\{m_t^\eps: \eps>0, t\in[0,T]\}$ is contained in a $W_1$-ball of a tight set. Since $\R^n$ is locally compact, the $W_1$-ball of a tight set is tight, which means $\{m_t^\eps: \eps>0, t\in[0,T]\}$ is tight itself. 
 By \eqref{eq: a priori1} and tightness the Arzel\`a-Ascoli theorem \cite[Proposition 3.3.1]{ags} implies narrow convergence for every $t\in[0,T]$.

We infer by dominated convergence that $m^\eps$ narrowly converges to $\rho\in \M_+((0,T)\times \R^n)$. In order to show tightness of $\{\iota J^\eps\}$ in $\M((0,T)\times \R^n;\R^n)$, replace $m_t^\eps$ by $\rho_t^\eps$ defined by
  \begin{align*}
      \rho_t^\eps=\sum_{x\in\cXe}\frac1{\deg x}\sum_{y\sim x}U_{[x,y]}m_t^\eps(x),
  \end{align*}
  where $U_{[x,y]}=\frac{\Hm^1|_{[x,y]}}{|y-x|}$ denotes the uniform distribution on the line segment $[x,y]$. By Assumption \ref{ass: graph} \eqref{edge}, $\rho^\eps\weakstar \rho$ in $\M_+((0,T)\times \R^n)$. Then, since on each line segment $[x,y]$
  \begin{align*}
      \frac{d\rho^\eps_t}{d\Hm^1}\geq \frac1{\deg x}\frac{m^\eps_t(x)}{|y-x|}+\frac1{\deg y}\frac{m^\eps_t(y)}{|y-x|},
  \end{align*}
  we have
  \begin{align*}
      \int_{[x,y]}\left|\frac{d\iota J^\eps_t}{d\rho_t^\eps}\right|^2\, d\rho_t^\eps\leq &\int_{[x,y]}\left|\frac{d\iota J^\eps_t}{d\Hm^1}\right|^2\left(\frac{d\rho^\eps_t}{d\Hm^1}\right)^{-1}\, d\Hm^1\\
      \leq &C\int_{[x,y]}\frac{J^\eps_t(x,y)^2|y-x|}{m^\eps_t(x)+m^\eps_t(y)}\, d\Hm^1\\
      \leq &C\frac{J^\eps_t(x,y)^2|y-x|^2}{\theta_{xy}(m^\eps_t(x),m^\eps_t(y))}
  \end{align*}
  and consequently
  \begin{align*}
       \int_0^T\int_{\R^n}\left|\frac{d\iota J^\eps_t}{d\rho_t^\eps}\right|^2\, d\rho_t^\eps\, dt\leq C\calAe(m^\eps,J^\eps).
  \end{align*}
  Together with the tightness of $\{\rho^\eps\}$ we infer that $\{\iota J^\eps\}$ is tight in $\M((0,T)\times \R^n;\R^n)$: Let $I\times A\subset (0,T)\times \R^n$ be Borel, then similarly as in \eqref{eq: a priori}, using additionally Assumption \ref{ass: graph} \eqref{edge}, we bound the total variation
  \begin{align*}
      |\iota J^\eps_t|(I\times A)\leq C\calAe(m^\eps,J^\eps)^{1/2}\rho^\eps(I\times B(A,R\eps))^{1/2}.
  \end{align*}
  Hence $\{\iota J^\eps\}$ is tight and converges (up to a subsequence) to some $j\in \M((0,T)\times \R^n;\R^n)$. Since $(m^\eps,\iota J^\eps)$ satisfies the continuity equation \eqref{eq: ccont} by Lemma \ref{lemma: cont}, it follows that $(\rho,j)$ satisfies it as well as \eqref{eq: ccont} is stable under narrow convergence. We conclude the argument by using the lower semicontinuity result from \cite[Theorem 2.34]{ambrosio2000functions} to get $j\ll\rho$ as well as
  \begin{align}\label{eq: AFP}
      \int\left|\frac{dj}{d\rho}\right|^2\, d\rho\leq \liminf_{\eps\to0}\int\left|\frac{d\iota J^\eps}{d\rho^\eps}\right|^2\, d\rho^\eps\leq C\liminf_{\eps\to0}\calAe(m^\eps,J^\eps).
  \end{align}
  This proves that $(\rho,j)$ is a finite action curve.

\end{proof}

\begin{rem}
     Note that \eqref{eq: AFP} is a nonoptimal version of the lower bound in Theorem \ref{thm: main}. In the case $(\cX,\cE)=(\Z^n,n.n.)$ with constant $\theta$, the constant in \eqref{eq: AFP} can be chosen as $1$, which is optimal, cf. \cite{gigli2013gromov}.
    \end{rem}

\subsection{Cell formula}\label{sec: rescaled graph}
For every $\eps>0$ we consider the rescaled graph $(\cXe,\cEe) = (\eps \cX , \eps \cE)$.

\begin{assumption}
\label{ass: cell}
We assume the following homogenized energy density exists and is independent of the choice of orthotope $Q\subset \R^n$:
    \begin{align}\label{eq: cell}
    f(v)=\lim_{\eps\to0}\inf_{(m^\eps,J^\eps)} \frac{F_\eps(m^\eps,J^\eps,Q)}{\Lm^n(Q)},
\end{align}
where $v\in\R^n$ and the infimum is taken over all pairs $m^\eps\in \calM_+(\cXe)$, $J^\eps\in \R^{\cEe}_a$ such that
\begin{equation}\label{eq: rep}
\begin{aligned}
    &\sum_{x\in\cXe}m^\eps(x)=\Lm^n(Q)\\
    &J^\eps(x,y)=\mathcal{J}^\eps_v(x,y)\quad \forall (x,y)\in \cEe \text{ with } \dist([x,y],\R^n\setminus Q) \leq R\eps\\
    &\sum_{y\in\cXe: y \sim x}J^\eps(x,y)=0 \quad\forall x\in\cXe.
\end{aligned}
\end{equation}
Here, $\mathcal J^\eps_v\colon \cEe\to\R$ is a uniform representative of the vector $v\in \R^n$, see Definition \ref{def: uniform}.
\end{assumption}

While this assumptions may look hard to satisfy, it is satisfied almost surely for stationary random graphs by Kingman's subadditive ergodic theorem, see Proposition \ref{prop: ergodic} below.

\subsection{Random graphs and ergodic theorem}

In the following let $(\cXo,\cEo)_{\omega\in\Omega}$ be a random graph in $\R^n$, where $(\Omega,\calA,\mathbb{P})$ is a probability space. In addition, let $(\sigma_\omega \in [\lambda,\Lambda]^{\cEo})_{\omega\in \Omega}$ be random weights, with $0<\lambda<\Lambda$, and $(\theta_\omega)_{\omega\in\Omega}$ be random mean functions. We now state our definition of stationary and ergodic random weighted graphs, see e.g. \cite{walters2000introduction}.

\begin{defi}\label{defi: stationary}
We call $(\cXo,\cEo,\sigma_\omega,\theta_\omega)_{\omega\in\Omega}$ \emph{stationary} if for every $z\in\Z^n$ the law of the weighted graph $(\cXo,\cEo,\sigma_\omega,\theta_\omega)_{\omega\in\Omega}$ is the same as the law of the translated weighted graph $(\cXo+z,\cEo+z,\sigma_\omega(\cdot - z),\theta_\omega(\cdot - z))_{\omega\in\Omega}$. We call $(\cXo,\cEo,\sigma_\omega,\theta_\omega)_{\omega\in\Omega}$ \emph{ergodic} if all events $A\in \calA$ that are invariant under translations in $\Z^n$ have probability $\mathbb{P}(A)\in\{0,1\}$.
\end{defi}

Note that stationarity does not mean that the random graph itself is translationally invariant.

\begin{prop}\label{prop: ergodic}
    Let $(\cXo,\cEo,\sigma_\omega,\theta_\omega)_{\omega\in\Omega}$ be stationary, with $(\cXo,\cEo)$ satisfying Assumption \ref{ass: graph} \eqref{path}, \eqref{ball} almost surely. Let $(\cXoe,\cEoe)$ be the rescaled random graph with the same weights and means. Then almost surely the limit  in the cell formula \eqref{eq: cell} exists, is finite, and is independent of the orthotope. If in addition the graph, the weights, and the mean functions are ergodic, then the limit in the cell formula is deterministic almost surely.
\end{prop}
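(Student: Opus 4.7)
The plan is to reformulate the cell formula \eqref{eq: cell} as the spatial average of a suitable subadditive set function on the unscaled graph, and then to apply a multiparameter subadditive ergodic theorem (the Akcoglu--Krengel $n$-dimensional extension of Kingman's). Concretely, for a bounded orthotope $B \subset \R^n$ and a vector $v \in \R^n$, I would define
\begin{equation*}
\mu_\omega(B, v) := \inf\Bigl\{ F_\omega(m, J, B) : (m, J) \text{ admissible for } (B, v)\Bigr\},
\end{equation*}
where admissibility means that $m$ is supported on $\cXo \cap B$ with a volume-proportional total mass, that $J \in \R_a^{\cEo}$ is discretely divergence-free at every vertex, and that $J$ agrees with the uniform representative $\mathcal{J}_v$ (Definition \ref{def: uniform}) on every edge within distance $R$ of $\partial B$. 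A standard scaling computation---using the positive homogeneity identity $F_\omega(am, bJ) = (b^2/a) F_\omega(m, J)$, the factor $\eps^2$ picked up by $|x-y|^2$ under rescaling, and the natural scaling of $\mathcal{J}^\eps_v$ so that $\iota \mathcal{J}^\eps_v \to v\,\Lm^n$---identifies the infimum in \eqref{eq: cell} with $\mu_\omega(Q/\eps, v)/\Lm^n(Q/\eps)$ up to normalization, so that the proposition reduces to the existence of a spatial-average limit of $\mu_\omega$ along the expanding orthotopes $Q/\eps$.

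Next, I would verify the three hypotheses of the multiparameter subadditive ergodic theorem for $B \mapsto \mu_\omega(B, v)$. \emph{Stationarity} $\mu_{\tau_z \omega}(B, v) = \mu_\omega(B + z, v)$ for $z \in \Z^n$ is immediate from the $\Z^n$-stationarity of $(\cXo, \cEo, \sigma_\omega, \theta_\omega)$ together with the translation-covariance of $\mathcal{J}_v$. \emph{Integrability} $\mathbb{E}[\mu_\omega(B, v)] \leq C(|v|)\, \Lm^n(B)$ follows from exhibiting a single admissible trial with a deterministic energy bound: take $J := \mathcal{J}_v$ on all of $\cEo$ and $m$ a suitable constant multiple of the counting measure on $\cXo \cap B$. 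Assumptions \ref{ass: graph}(2)--(4) cap vertex density and degree, and together with $\sigma \leq \Lambda$ and the normalization $\theta(a, a) = a$ this gives $F_\omega(m, J, B) \leq C(|v|)\,\Lm^n(B)$ deterministically. \emph{Subadditivity} under a split $B = B_1 \sqcup B_2$ is the main geometric content: starting from near-optimal $(m_i, J_i)$ for $(B_i, v)$, set $m := m_1 + m_2$ and $J := J_i$ on edges contained in $B_i$, $J := \mathcal{J}_v$ on the remaining edges of $\cEo$. Because $J_1$ and $J_2$ both already coincide with $\mathcal{J}_v$ in the $R$-tubular neighborhoods of their respective boundaries---which by Assumption \ref{ass: graph}(3) contain every edge crossing the interface---the pasted $J$ stays divergence-free at every vertex and satisfies the outer boundary condition. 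Mass is additive, and by $\sigma$-additivity of $F_\omega(\cdot, \cdot, A)$ with nonnegativity,
\begin{equation*}
\mu_\omega(B, v) \leq F_\omega(m_1, J_1, B_1) + F_\omega(m_2, J_2, B_2).
\end{equation*}

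The multiparameter subadditive ergodic theorem then delivers $f_\omega(v) := \lim_{k \to \infty} \mu_\omega(k Q, v)/\Lm^n(k Q)$ almost surely for any fixed rational orthotope $Q$, with $f_\omega \in L^1$. Independence of the limit from $Q$ follows from a standard covering argument: any two orthotopes can be mutually nested by meshes of translates of a common orthotope up to boundary strips of relatively small volume, and the correction is uniformly controlled by the integrability bound. Undoing the rescaling identifies \eqref{eq: cell} with $f_\omega(v)$ and gives its finiteness. Under the additional ergodicity assumption, $f_\omega(v)$ is invariant under the ergodic $\Z^n$-action (as the limit of a stationary process), hence deterministic almost surely by the ergodic zero--one law.

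The hard part will be the gluing step underlying subadditivity, and in particular the compatibility of the boundary prescription $\mathcal{J}_v$ across the interface $\partial B_1 \cap \partial B_2$. This is precisely what the \emph{uniform} representative in Definition \ref{def: uniform} is designed to ensure: $\mathcal{J}_v$ is a single globally defined flux on $\cEo$, so that the restrictions to $B_1$ and $B_2$ automatically agree on interface edges and the outer boundary condition propagates through the paste. A secondary bookkeeping issue is reconciling the normalization $\sum m^\eps = 1$ in \eqref{eq: rep} with the volume-proportional normalization used for $\mu_\omega$, which is absorbed by the scaling identity $F_\omega(am, \sqrt{a}\,J) = F_\omega(m, J)$ together with the homogeneity of the boundary representatives.
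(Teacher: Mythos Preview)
Your proposal is correct and follows essentially the same approach as the paper: the paper's proof is a single sentence invoking the multiparameter subadditive ergodic theorem (cited as \cite[Theorem 4.1]{licht2002global}) for the set function $A \mapsto \inf_{(m,J)} F_\omega(m,J,A)$ with the infimum over admissible pairs in \eqref{eq: rep}, and you supply precisely the verification of stationarity, integrability, and subadditivity that this citation leaves implicit. Your observation that the gluing for subadditivity works because $\mathcal{J}_v$ is a single globally defined divergence-free flow on $\cEo$, so that the boundary prescriptions on adjacent orthotopes automatically match across the interface, is the key point the paper does not spell out; the remaining scaling bookkeeping you flag is routine.
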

\begin{proof}
    This follows from Kingman's subadditive ergodic theorem \cite[Theorem 4.1]{licht2002global}. Here we view $Q\mapsto \inf_{(m,J)}F(m,J,Q)$, where the infimum is taken among solutions of \eqref{eq: rep}, as the subadditive stationary set function. Assumption \ref{ass: graph} \eqref{path},\eqref{ball} are necessary to define the uniform representative $\calJ^\eps_v$ which appears in the cell formula. We find by a discrete change of variables
    \begin{equation}
    \lim_{\eps \to 0} \inf_{(m^\eps,J^\eps)} \frac{F_\eps(m^\eps,J^\eps,Q)}{\Lm^n(Q)} = \lim_{\eps\to 0} \inf_{(m,J)} \frac{\eps^n F(m,J,Q/\eps)}{\Lm^n(Q)} = \lim_{N\to \infty} \inf_{(m,J)}\frac{F(m,J,NQ)}{\Lm^n(NQ)},
    \end{equation}
    where the existence and independence of $Q$ of the final limit is precisely the statement of Kingman's subadditive ergodic theorem.
    \end{proof}

Here are two examples of stationary random graphs satisfying Assumption \ref{ass: graph}:

\begin{example}
    A simple example is the \emph{random conductance model} (see Figure \ref{fig:random}A): The deterministic graph $(\cXo,\cEo)=(\Z^n,n.n.)$ is given by the lattice $\Z^n$ with nearest neighbor interaction. The random weights $(\sigma_\omega\colon n.n.\to[\lambda,\Lambda])_{\omega\in\Omega}$ are the reciprocals of the random conductances. If the weights are stationary, e.g. if they are i.i.d., Assumption \ref{ass: cell} is satisfied. Random walks in random weighted graphs have been considered in e.g. \cite{biskup,andres2013invariance}.
\end{example}

\begin{figure}[h!]
  \centering
  \begin{subfigure}[b]{0.4\linewidth}
    \includegraphics[width=\linewidth]{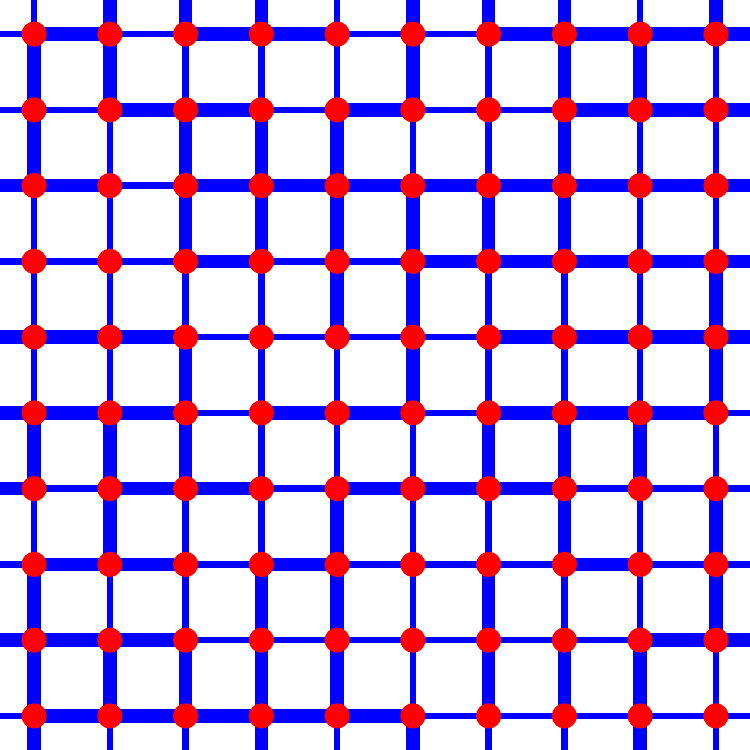}
    \caption{}
  \end{subfigure}
  \hspace{1cm}
  \begin{subfigure}[b]{0.4\linewidth}
    \includegraphics[width=\linewidth]{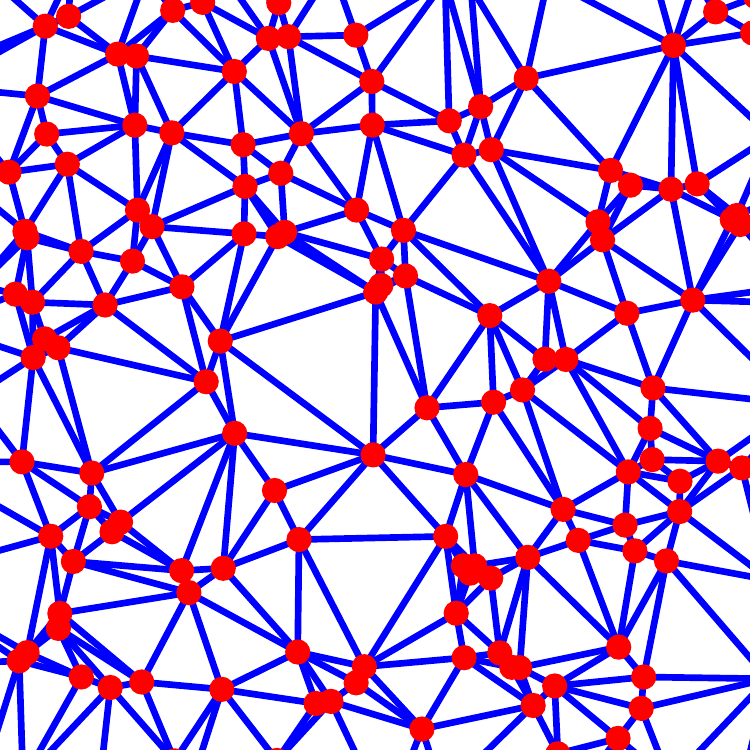}
    \caption{}
  \end{subfigure}
  \caption{Two stationary random graphs: (A) Random conductance model, (B) Dual Voronoi (Delaunay) triangulation of random points.}
  \label{fig:random}
\end{figure}

\begin{example}
    The Poisson Point Process does not satisfy Assumption \ref{ass: graph}\eqref{ball} almost surely. By contrast, the following point process (see Figure \ref{fig:random}B) satisfies Assumption \ref{ass: graph}: Define $\cXo= \{z+\xi_\omega(z)\,:\,z\in \Z^n\}$, where $(\xi(z))_{z\in \Z^n}$ are i.i.d. bounded random shifts. Define the edges through the dual Voronoi tessellation: Two vertices $z+\xi_\omega(z)$, $z'+\xi_\omega(z')$ are connected if and only if their Voronoi cells $A_\omega(z)$, $A_\omega(z')$ have an interface in the sense that $\Hm^{n-1}(\partial A_\omega(z) \cap \partial A_\omega(z'))>0$. The resulting graph satisfies Assumption \ref{ass: graph} by the boundedness of the shifts:
    \eqref{ball} holds for $R=\|\xi\|_\infty+\sqrt{n}/2$. \eqref{edge} holds for $R=2\|\xi\|_\infty+ \sqrt{n}$. \eqref{degree} holds, since the vertex degree is bounded by $O((R+1)^n)$. \eqref{path} holds because the number of points on the shortest path between $z+\xi_\omega(z)$ and $z+e_i+\xi_\omega(z+e_i)$ is bounded by $O((R+1)^n)$.
\end{example}

Since the rest of the article uses no probability theory, we will only work with Assumption \ref{ass: cell} instead of stationarity.

\subsection{Boundary values in the cell formula}

The cell formula \eqref{eq: rep} allows only flows $J^\eps$ that satisfy the boundary values $J^\eps = \calJ_v^\eps$ near $\partial Q$. In continuous cell formulas, the canonical choice of boundary value in this context is the constant $j^\eps = v$ on $\partial Q$. In the graph setting, there is no uniform flow of magnitude $v$. However, for graphs satisfying \eqref{path} and \eqref{ball} in Assumption \ref{ass: graph}, there are flows $\calJ_v^\eps$ that are approximately uniform, which we call \emph{uniform representatives}. We give the precise definition and show their existence in this section.

The following lemma provides a mapping $\phi$ from $\Z^n$ into $(\cX,\cE)$, sending nearest neighbor pairs in $\Z^n$ to paths in $(\cX,\cE)$.

\begin{lemma}\label{lemma: homo}[{Copy of $\Z^n$ in $(\cX,\cE)$}]
Assume \eqref{path} and \eqref{ball} in Assumption \ref{ass: graph}. Then there is a graph homomorphism $\phi$ from a subdivision of $ (\Z^n,n.n.)$ to $(\cX,\cE)$, i.e.\ a map $\phi: \Z^n\to\cX$ and a map also called $\phi: n.n.\to \{\text{paths in }(\cX,\cE)\}$ such that $\phi((z,z'))$ is a path from $\phi(z)$ to $\phi(z')$ , with  
\begin{align*}
    &|\phi(z)-z|\leq R, \quad\forall z\in\Z^n\\
    &L(\phi(z,z'))\leq 2R(R+1) \quad \forall z,z'\ n.n. \text{ in } \Z^n.
\end{align*}
\end{lemma}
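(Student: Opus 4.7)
The plan is to construct $\phi$ in two steps: first define it on vertices using the density assumption, and then extend to edges by selecting paths via the quasi-isometric connectivity assumption.

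For the vertex map, I would proceed as follows. For each $z \in \Z^n$, Assumption \ref{ass: graph} (2) guarantees that $\cX \cap B(z,R) \neq \emptyset$. Using a fixed well-ordering of $\cX$ (or any deterministic selection rule, e.g.\ lexicographic order on coordinates with ties broken canonically), pick $\phi(z)$ to be the chosen element of $\cX \cap B(z,R)$. This immediately yields $|\phi(z)-z|\leq R$ for every $z\in \Z^n$.

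For the edge map, let $z,z'\in \Z^n$ be nearest neighbors, so $|z-z'|=1$. By the triangle inequality,
\[
|\phi(z)-\phi(z')|\leq |\phi(z)-z|+|z-z'|+|z'-\phi(z')|\leq R+1+R=2R+1.
\]
Here I would invoke Assumption \ref{ass: graph} (1) (which I believe is implicitly needed in addition to (2) and (3), since pure density and bounded edge length do not by themselves guarantee connectedness) to produce a path $P_{z,z'}$ in $(\cX,\cE)$ from $\phi(z)$ to $\phi(z')$ with
\[
L(P_{z,z'})\leq R(|\phi(z)-\phi(z')|+1)\leq R(2R+2)=2R(R+1).
\]
Again using a deterministic selection rule, I set $\phi((z,z')):=P_{z,z'}$.

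The only potential subtlety is to ensure the selections are well-defined (both for vertices and for paths), which is addressed by fixing an ordering once and for all; no real obstacle arises. Assumption \ref{ass: graph} (3) is used implicitly in the sense that the path $P_{z,z'}$ is made of edges of length at most $R$, so its Euclidean length equals the sum of edge lengths along $P_{z,z'}$; it does not enter the bound directly, but it ensures that the quasi-isometry interpretation of the estimate is meaningful. Thus $\phi$ is the desired graph homomorphism from a subdivision of $(\Z^n, n.n.)$ into $(\cX,\cE)$.
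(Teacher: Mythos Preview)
Your proof is correct and follows essentially the same approach as the paper: pick $\phi(z)\in\cX\cap B(z,R)$ via the density assumption, then connect $\phi(z)$ and $\phi(z')$ by a path of length at most $R(|\phi(z)-\phi(z')|+1)\leq R(2R+2)$. Your observation that Assumption~\ref{ass: graph}(1) is implicitly needed (not just (2) and (3) as stated) is correct; the paper's own proof also invokes the path-length bound from (1).
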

\begin{proof}
    For every $z\in\Z^n$ we choose $\phi(z)=x\in \cX\cap B(z,R)$ according to assumption \eqref{ball}. Then the first assertion follows. To define $\phi(z,z')$, choose a path $P$ from $\phi(z)$ to $\phi(z')$ of length $L(P)\leq R(|\phi(z)-\phi(z')|+1)\leq R(2R+2)$. 
\end{proof}

By rescaling, there is also a copy of $\eps \Z^n$ in $(\cXe,\cEe)$.

\begin{defi}\label{def: uniform}
    A uniform representative of a vector $v\in \R^n$ is a flow $\calJ^\eps_v \in \R^{\cEe}_a$ defined through
    \[
    \calJ^\eps_v = \phi_\eps^\# v_{\eps},
    \]
    where $\phi_\eps:(\eps \Z^n, n.n.) \to (\cXe,\cEe)$ is as in Lemma \ref{lemma: homo} (but rescaled) and $v_{\eps}\in \R^{n.n.}_a$,$v_\eps(z,z\pm \eps e_i) = \pm \eps^{n-1} v_i$ is the uniform flow in direction $v$ on $(\eps \Z^n,n.n.)$. 
\end{defi}

In the above definition, the pushforward of a flow $v\in \R^\cE_a$ under a graph homomorphism $\phi$ from a subdivision of $(\cX,\cE)$ into $(\cX',\cE')$ is defined as
\begin{equation}
    \phi^\#v = \frac12 \sum_{(x,y)\in \cE} v(x,y) J_{\phi(x,y)},
\end{equation}
where $\phi(x,y) = (\phi(x),r_1,\ldots,r_{N-1},\phi(y))$ is a path in $(\cX',\cE')$, and for any path $P= (r_0,\ldots,r_N)$ in $(\cX',\cE')$ the unit flow along $P$ is defined as $J_P\in \R^{\cE'}_a$,
\begin{equation}
J_P =\sum_{k=0}^{N-1} (\delta_{(r_k,r_{k+1})} - \delta_{(r_{k+1},r_k)}).
\end{equation}

The following lemma is taken directly from \cite[Proposition 5.4]{gladbach2024stochastic}. We give the short proof for the reader's convenience.
\begin{lemma}[Properties of $\calJ^\eps_v$]\label{lem: prop}
Assume \eqref{path} and \eqref{ball} in Assumption \ref{ass: graph}.
The following assertions hold:

\begin{enumerate}
    \item $\sum_{y\sim x}\mathcal J^\eps_v(x,y)=0$ $\forall x\in\cXe$;
        \item $|\iota\calJ^\eps_v|(A)\leq C |v| \Lm^n(A)$ for every orthotope (hyperbox) $A\subset \R^n$ of side length at least $\eps$ in all directions, where $C=C(R,n)$;
    \item $\iota \calJ^\eps_v\weakstar v\Lm^n$ vaguely.
 \end{enumerate}
    \end{lemma}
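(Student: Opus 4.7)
My plan is to derive all three statements from the pushforward definition $\calJ^\eps_v = \phi_\eps^\# v_\eps$, together with the following structural facts (which follow from the rescaled Lemma~\ref{lemma: homo}): the reference flow $v_\eps$ on $\eps\Z^n$ is antisymmetric and divergence-free at every node; each path $\phi_\eps(z,z')$ has Euclidean length $\leq 2R(R+1)\eps$; and $|\phi_\eps(z)-z|\leq R\eps$. For statement (1), I would first establish the general identity
\begin{equation*}
    \sum_{y\sim x}\phi_\eps^\# v_\eps(x,y)=\sum_{z\in\eps\Z^n\colon\phi_\eps(z)=x}\,\sum_{z'\sim z}v_\eps(z,z'),
\end{equation*}
which is an immediate consequence of the antisymmetry of $v_\eps$ together with the telescoping identity $\sum_{y}J_P(x,y)=\mathbf{1}_{x=r_0}-\mathbf{1}_{x=r_N}$ for a unit path-flow from $r_0$ to $r_N$. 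The right-hand side vanishes because $v_\eps$ is pointwise divergence-free on $\eps\Z^n$, giving (1).

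For (2), the triangle inequality applied to the pushforward formula bounds $|\iota\calJ^\eps_v|(A)$ by
\begin{equation*}
\tfrac14\sum_{(z,z')}|v_\eps(z,z')|\sum_{(x,y)\in\phi_\eps(z,z')}\Hm^1([x,y]\cap A).
\end{equation*}
Only those lattice pairs $(z,z')$ whose path meets $A$ contribute, and this forces $z$ to lie within some $C(R)\eps$ of $A$. For an orthotope of side length $\geq\eps$ in every direction, a Minkowski-enlargement estimate yields $\Lm^n(A+B(0,C\eps))\leq C'(R,n)\,\Lm^n(A)$, so the number of such basepoints $z$ is at most $C''\Lm^n(A)/\eps^n$. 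Combined with $|v_\eps(z,z')|\leq\eps^{n-1}|v|$, the $2n$ neighbors per lattice point, and the path length $\leq C(R)\eps$, this delivers the claimed bound $|\iota\calJ^\eps_v|(A)\leq C|v|\Lm^n(A)$ with $C=C(R,n)$.

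For (3), I would test against $\varphi\in C_c^\infty(\R^n;\R^n)$ and write
\begin{equation*}
    \int\varphi\cdot d\iota\calJ^\eps_v=\tfrac12\sum_{(z,z')}v_\eps(z,z')\int_{\phi_\eps(z,z')}\varphi\cdot dl.
\end{equation*}
On each short path the line integral equals $\varphi(\phi_\eps(z))\cdot(\phi_\eps(z')-\phi_\eps(z))+O(\eps^2\|\nabla\varphi\|_\infty)$. Setting $\delta_z:=\phi_\eps(z)-z$ with $|\delta_z|\leq R\eps$, I would Taylor-expand $\varphi(\phi_\eps(z))=\varphi(z)+O(\eps\|\nabla\varphi\|_\infty)$ and write $\phi_\eps(z')-\phi_\eps(z)=(z'-z)+(\delta_{z'}-\delta_z)$. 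The leading piece $\tfrac12\sum v_\eps(z,z')\varphi(z)\cdot(z'-z)$ collapses, via the explicit identity $\sum_{z'\sim z}v_\eps(z,z')(z'-z)=2\eps^n v$, to the Riemann sum $\eps^n\sum_{z\in\eps\Z^n}\varphi(z)\cdot v$, which converges to $v\cdot\int\varphi\,dx$. The main technical step, and the only subtle point, is the cross term $\tfrac12\sum v_\eps(z,z')\varphi(z)\cdot(\delta_{z'}-\delta_z)$: naive bounds give only $O(1)$, but applying divergence-freeness of $v_\eps$ node-by-node (first at $z$, then at $z'$ after a further Taylor expansion of $\varphi$ around $z'$) telescopes it down to $O(\eps)$. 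All other error pieces are $O(\eps)$ directly by power counting, and the vague convergence follows.
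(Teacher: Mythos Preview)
Your proof is correct. Parts (1) and (2) are essentially identical to the paper's argument: the divergence identity for pushforwards of path-flows, and the Minkowski-enlargement count of lattice points whose paths can meet $A$.

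For part (3) you take a genuinely different route. The paper, after reducing to the sum $\sum_{z}\eps^{n-1}\varphi(z)\cdot(\phi_\eps(z+\eps e_i)-\phi_\eps(z))$, handles the oscillatory displacements $\phi_\eps(z)-z$ by a Ces\`aro averaging trick: it averages $\varphi$ over $N$ consecutive lattice shifts in direction $e_i$, telescopes the $\phi_\eps$-differences into $(\phi_\eps(z+N\eps e_i)-\phi_\eps(z))/N$, and then uses $|\phi_\eps(z)-z|\leq R\eps$ together with the choice $N\approx\eps^{-1/2}$ to make the error vanish. Your approach instead isolates the cross term $\tfrac12\sum v_\eps(z,z')\varphi(z)\cdot(\delta_{z'}-\delta_z)$ and kills it by two applications of the discrete divergence-free condition (summation by parts at $z$ removes the $\delta_z$ piece; relabeling and summing by parts at $z'$, after writing $\varphi(z)=\varphi(z')+O(\eps)$, removes the $\delta_{z'}$ piece up to an $O(\eps)$ remainder). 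This is cleaner and avoids the auxiliary parameter $N$; the paper's version has the minor advantage that it only uses the modulus of continuity of $\varphi$, so it applies directly to $C_c^0$ test functions, whereas you need $C^1$ regularity---but this is harmless since part (2) gives the uniform local mass bound needed to pass from $C_c^\infty$ to $C_c^0$ by density.
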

    \begin{proof}
        For (1): Clearly, $v_\eps$ is divergence-free in $(\eps \Z^n, n.n.)$. By the above definition, the divergence of $J_{\phi_\eps(z,z')}$ is $\delta_{\phi_\eps(z')} - \delta_{\phi_\eps(z)}$, and summing over all nearest neighbors yields
        \begin{align*}
            \sum_{y\sim x}\mathcal J^\eps_v(x,y)=\sum_{z\in \phi_\eps^{-1}(x)}\sum_{z'\sim z} v_\eps(z,z') =0\quad \text{ for all }x\in\cXoe.
        \end{align*}

For (2): Observe that 
\begin{align*}
    |\iota\calJ^\eps_v|(A)\leq &\sum_{z,z'\, n.n.}|v_\eps(z,z')|\Hm^1(A \cap \phi_\eps(z,z'))\\
    \leq &  \sum_{z\in\eps\Z^n\,:\,\dist(z,A)\leq 2R(R+1)\eps} \eps^{n-1}|v|\Hm^1(\phi_\eps(z,z'))\\
    \leq &C(R,n)\eps^n|v|\frac{\Lm^n(B(A,C(R)\eps))}{\eps^n} \leq C(R,n)|v|\Lm^n(A),
\end{align*}
where in the last step we used that $A$ has side length at least $\eps$ in all directions.

For (3): It suffices to show the assertion for $v=e_i$. Note that (2) implies compactness in the vague topology by virtue of De La Vall\'ee Poussin's theorem, see e.g. \cite[Corollary 1.60]{ambrosio2000functions}.
  The limit must be $e_i\Lm^n$. To see this, take a test function $\varphi\in C_c(\R^n;\R^n)$, then
       \begin{align}\label{eq: test}
       \langle \iota\calJ^\eps_v, \varphi \rangle = &\sum_{z\in \eps\Z^n}\eps^{n-1} \int_{\phi_\eps(z,z+\eps e_i)} \varphi\cdot \tau_\eps\,d\Hm^{1},
       \end{align}
       where $\tau_\eps$ is the normalized tangent vector field along $\phi_\eps(z,z')$. 
We approximate $\varphi$ on $\phi_\eps(z,z')$ by $\varphi(z)$ using the modulus of continuity $\rho: [0,\infty)\to[0,\infty)$ of $\varphi$
 \begin{align*}
       \langle \iota\calJ^\eps_v, \varphi \rangle = &\sum_{z\in \eps\Z^n}\eps^{n-1} \varphi(z)\cdot\int_{\phi_\eps(z,z+\eps e_i)} \tau_\eps\,d\Hm^{1}+O(\rho((R+1)\eps)\Lm^n(B(\supp \varphi,R\eps))\\
       =&\sum_{z\in \eps\Z^n}\eps^{n-1} \varphi(z)\cdot(\phi_\eps(z+\eps e_i)-\phi_\eps(z))+O(\rho((R+1)\eps)\Lm^n(B(\supp \varphi,R\eps)),
       \end{align*}
       where we used the fundamental theorem of calculus in the last step. Making again use of the continuity of $\varphi$, we find for $N\geq 1$
       that 
       \begin{align*}
           &\sum_{z\in \eps\Z^n}\eps^{n-1} \varphi(z)\cdot(\phi_\eps(z+\eps e_i)-\phi_\eps(z))\\
           =&\sum_{z\in \eps\Z^n}\eps^{n-1} \frac1N\sum_{k=1}^N\varphi(z+k\eps e_i)\cdot(\phi_\eps(z+\eps e_i)-\phi_\eps(z))+\rho(N\eps)\\
           =&\sum_{z\in \eps\Z^n}\eps^{n-1} \varphi(z)\cdot\frac{\phi_\eps(z+N\eps e_i)-\phi_\eps(z)}{N}+\rho(N\eps).
       \end{align*}
       Since $|\phi_\eps(z)-z|\leq R\eps$ by Lemma \ref{lemma: homo}, we obtain
          \begin{align*}
        &\sum_{z\in \eps\Z^n}\eps^{n-1} \varphi(z)\cdot\frac{\phi_\eps(z+N\eps e_i)-\phi_\eps(z)}{N}+\rho(N\eps)\\
        =&\sum_{z\in \eps\Z^n}\eps^{n-1} \varphi(z)\cdot \eps e_i+O\left(\sum_{z\in \eps\Z^n}\frac{\eps^{n}}N \varphi(z)\right)+\rho(N\eps).
       \end{align*}
     The first term is a Riemann sum and plugging the last equation back into 
     \eqref{eq: test} we see that for $N\approx 1/\sqrt{\eps}$ 
     \begin{align*}
       \lim_{\eps\to0}  \langle \iota\calJ^\eps_v, \varphi \rangle =\langle e_i,\varphi\rangle.
     \end{align*}

    \end{proof}

\subsection{Homogenized action}

For a finite action curve $(\rho,j)$ we define the homogenized action 
\begin{align}\label{eq: hom action}
    A(\rho,j)=\int_0^T\int_{\R^n}f\left(\frac{dj_t}{d\rho_t}\right)\, d\rho_t\, dt.
\end{align}

Here, the homogenized energy density $f : \R^n \to [0,\infty)$ is given by the cell formula \eqref{eq: cell}.

   \begin{lemma}\label{lemma: f}
Assume \eqref{path}, \eqref{ball} and \eqref{degree} in Assumption \ref{ass: graph}.   For all $v\in\R^n$ there exist constants $c,C>0$ depending only on $R,n,\lambda,\Lambda,$ and the maximum degree such that
   \begin{align*}
       c|v|^2\leq f(v)\leq C|v|^2.
   \end{align*}
   \end{lemma}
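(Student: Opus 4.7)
The plan is to prove the two-sided bound by an explicit test pair for the upper bound and by a Jensen-type estimate applied to the embedded current $\iota J^\eps$ for the lower bound. By the $2$-homogeneity of $f$, which follows from scaling $J^\eps\mapsto \alpha J^\eps$ in \eqref{eq: cell}, it suffices to treat $|v|=1$, and by Assumption \ref{ass: cell} I may fix $Q=[0,1]^n$.

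For the upper bound I take the explicit admissible pair $m^\eps(x):=1/N_\eps$ (uniform on $\cXe\cap Q$, with $N_\eps:=\#(\cXe\cap Q)$) and $J^\eps:=\calJ_v^\eps$. Lemma \ref{lem: prop}(1) gives divergence-freeness, and the boundary condition holds by construction. The homomorphism $\phi_\eps$ from Lemma \ref{lemma: homo} has paths of length at most $2R(R+1)$ and $\cXe$ has bounded degree, so each edge of $\cEe$ lies on at most $O_{R,n}(1)$ of the paths $\phi_\eps(z,z')$, giving $|\calJ_v^\eps(x,y)|\leq C_1|v|\eps^{n-1}$. Assumption \ref{ass: graph}(2),(4) yield $N_\eps\asymp \eps^{-n}$ and $\#(\cEe\cap Q)\leq (\max\deg)N_\eps$. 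Combining these with $\sigma\leq \Lambda$, $|x-y|\leq R\eps$, and $\theta_{xy}(m^\eps(x),m^\eps(y))\geq c_1/N_\eps$ (from $1$-homogeneity, monotonicity, $\theta(1,1)=1$) gives
\begin{align*}
F_\eps(m^\eps, J^\eps, Q) \leq (\max\deg)\,N_\eps\cdot \Lambda R^2\eps^2\cdot C_1^2|v|^2\eps^{2n-2}\cdot N_\eps/c_1 \leq C|v|^2,
\end{align*}
whence $f(v)\leq C|v|^2$.

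For the lower bound, let $(m^\eps,J^\eps)$ be any admissible pair for \eqref{eq: cell}. Spread the mass along incident edges as in Proposition \ref{prop: compactness}, setting $\rho^\eps:=\sum_{x\in\cXe}\tfrac{1}{\deg x}\sum_{y\sim x}m^\eps(x)U_{[x,y]}$. The edgewise computation in Proposition \ref{prop: compactness} (which uses $\sigma\geq \lambda$) yields
\[
\int_Q\Bigl|\tfrac{d\iota J^\eps}{d\rho^\eps}\Bigr|^2\,d\rho^\eps\leq C\,F_\eps(m^\eps,J^\eps,Q),
\]
and Cauchy--Schwarz applied to $\iota J^\eps\ll\rho^\eps$ against the probability measure $\rho^\eps/\rho^\eps(Q)$ gives
\[
\int_Q\Bigl|\tfrac{d\iota J^\eps}{d\rho^\eps}\Bigr|^2\,d\rho^\eps\geq \frac{|\iota J^\eps(Q)|^2}{\rho^\eps(Q)}.
\]
Since $J^\eps$ is discretely divergence-free on $\cXe$ and equals $\calJ_v^\eps$ in a $R\eps$-neighborhood of $\partial Q$, the current $\iota J^\eps$ is distributionally divergence-free on $\R^n$ (steady-state version of Lemma \ref{lemma: cont}) and coincides with $\iota\calJ_v^\eps$ in an $O(\eps)$-neighborhood of $\partial Q$. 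A standard integration by parts (using the identity $X_i=\dive(x_iX)$ for divergence-free $X$, applied to a smooth cutoff of $x_i\mathbf{1}_Q$) shows $\iota J^\eps(Q)\cdot e_i=\iota\calJ_v^\eps(Q)\cdot e_i+o(1)$, and Lemma \ref{lem: prop}(3) gives $\iota\calJ_v^\eps(Q)\to v\Lm^n(Q)=v$. Since $m^\eps$ is a probability supported on $\cXe\cap Q$, one has $\rho^\eps(Q)\leq 1+o(1)$, and the bounds combine to yield $F_\eps(m^\eps,J^\eps,Q)\geq c|v|^2$, so that $f(v)\geq c|v|^2$.

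The main obstacle is the flux identity $\iota J^\eps(Q)\to v$ in the lower bound: the integration-by-parts step picks up a boundary term that cannot be controlled from interior data of $J^\eps$ alone, but must instead be identified with the prescribed boundary flow $\iota\calJ_v^\eps$, whose macroscopic density is fixed by Lemma \ref{lem: prop}(3). The remaining estimates are quantitative but follow routinely from Assumption \ref{ass: graph} and the properties of $\theta$ and $\calJ_v^\eps$.
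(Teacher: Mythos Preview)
Your lower bound is correct and is essentially the paper's argument in different clothing: the paper applies the subadditivity of $(j,m_0,m_1)\mapsto |j|^2/(m_0+m_1)$ directly to the discrete sum, which is exactly the Cauchy--Schwarz step you perform after embedding into $(\iota J^\eps,\rho^\eps)$. Both then conclude via $\iota J^\eps(Q)\to v$, which the paper also invokes by citing the divergence constraint and boundary values.

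Your upper bound, however, has a genuine gap. You assert that Assumption~\ref{ass: graph}(2),(4) give $N_\eps\asymp\eps^{-n}$, but they do \emph{not} give the upper bound $N_\eps\le C\eps^{-n}$ with $C$ depending only on $R,n$ and the maximum degree. A counterexample: start from $(\Z^n,n.n.)$ and attach to each lattice point $z$ a chain of $M$ extra vertices of total Euclidean length $1/10$. All four items of Assumption~\ref{ass: graph} hold with $R$ and maximum degree independent of $M$, yet the point density is $M{+}1$. With your uniform choice $m^\eps\equiv 1/N_\eps$ the energy estimate becomes $F_\eps\le C\,\eps^{2n}N_\eps^2\,|v|^2$, which blows up with $M$; the constant you obtain is not the one claimed in the lemma.

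The paper circumvents this by choosing a \emph{flow-adapted} probability,
\[
m^\eps(x)=\frac{\sum_{y\sim x}|\calJ_v^\eps(x,y)|\,\Hm^1(Q\cap[x,y])}{\sum_{(\tilde x,\tilde y)\in\cEe}|\calJ_v^\eps(\tilde x,\tilde y)|\,\Hm^1(Q\cap[\tilde x,\tilde y])},
\]
which puts no mass on vertices that carry no flow. With this choice the energy collapses to a quantity bounded by $C\bigl(|\iota\calJ_v^\eps|(Q)\bigr)^2\le C|v|^2$ via Lemma~\ref{lem: prop}(2), entirely bypassing any count of vertices. If you want to keep a ``uniform'' competitor you would need an additional density assumption; as stated, the right fix is to switch to the flow-adapted $m^\eps$.
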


   \begin{example}
       Consider the case $(\cX, \cE)=(\Z^n,n.n.)$ with constant $\theta$ and $\sigma(x,y)=1$ for all nearest neighbors. In this case the uniform representative on $\eps\Z^n$ is
       \begin{align*}
           \calJ_v^\eps(\eps z,\eps z')=\eps^{n-1}v\cdot(z'-z)=v_\eps.
       \end{align*}
       Our chosen competitor to the cell formula for $Q=[0,1)^n$ with $\eps=1/N$ is the pair $J^\eps=\calJ_v^\eps$, $m^\eps = \mathds{1}_{\bar{Q}} \frac{\eps^n}{(1+\eps)^n}$. We calculate explicitly
       \begin{align*}
           f(v) \leq \lim_{\eps\to 0} F_\eps(m^\eps,J^\eps,Q) = \lim_{\eps\to 0} 2\sum_{i=1}^n N(N+1)^{n-1} \frac{\eps^2 (\eps^{n-1}v_i)^2}{\theta(\frac{\eps^n}{(1+\eps)^n},\frac{\eps^n}{(1+\eps)^n})} = 2|v|^2.
       \end{align*}
       For the lower bound $f(v)\geq 2|v|^2$, consider any competitor $(m^\eps, J^\eps)$ to the cell problem in $Q=[0,1)^n$. We decompose the energy by direction and use subadditivity:
       \begin{align*}
           F_\eps(m^\eps,J^\eps, Q)= &2\sum_{z\in Q\cap \eps \Z^n}\sum_{i=1}^n\frac{\eps^2 J^\eps(z,z+\eps e_i)^2}{\theta(m^\eps(z),m^\eps(z+\eps e_i))}\\
           \geq& 2\sum_{i=1}^n\frac{(\eps\sum_{z\in Q\cap \eps \Z^n}J^\eps(z,z+\eps e_i))^2}{\theta(\sum_{z\in Q\cap \eps \Z^n}m^\eps(z),\sum_{z\in Q\cap \eps \Z^n}m^\eps(z+\eps e_i))}\\
           =&2\sum_{i=1}^n(\eps\sum_{z\in Q\cap \eps \Z^n}J^\eps(z,z+\eps e_i))^2
           =2\sum_{i=1}^n(\eps\sum_{z\in Q\cap \eps \Z^n}v_\eps(z,z+\eps e_i))^2\\
           =&2\sum_{i=1}^n (\iota J^\eps(Q) \cdot e_i)^2 = 2\sum_{i=1}^n|v_i|^2,
       \end{align*}
       where we used that $J^\eps$ is divergence-free and $\iota J^\eps(Q)$ only depends on its boundary values.
   \end{example}
   \begin{proof}[Proof of Lemma \ref{lemma: f}]

       To prove the upper bound, take the competitor $J^\eps = \calJ^\eps_v$ and
       \[
       m^\eps(x) = \frac{\Lm^n(Q)}{2}\frac{\sum_{y\sim x} |J^\eps(x,y)|\Hm^1(Q\cap [x,y])}{\sum_{(\tilde x, \tilde y)\in \cEe}  |J^\eps(\tilde x,\tilde y)|\Hm^1(Q\cap [\tilde x,\tilde y])},\quad\text{ for all }x\in\cXe.
       \]

       Note that $(m^\eps,J^\eps)$ is a competitor to the cell problem. Due to the positive $1$-homogeneity, normalization, and monotonicity of $\theta_{xy}$ we may estimate 
       \[
       \theta_{xy}(m^\eps(x),m^\eps(y)) \geq \frac{\Lm^n(Q)}{2}\frac{ |J^\eps(x,y)|\Hm^1(Q\cap[x,y])}{\sum_{(\tilde x,\tilde y)\in \cEe}|J^\eps(\tilde x,\tilde y)|\Hm^1(Q\cap [\tilde x,\tilde y])}.
       \]

       Then a direct calculation shows that
       \begin{align*}
           F_\eps(m^\eps,J^\eps,Q)=&\sum_{(x,y)\in \cEe}\sigma(x,y)|x-y|^2\frac{J^\eps(x,y)^2}{\theta_{xy}(m^\eps(x),m^\eps(y))} \frac{\Hm^1([x,y]\cap Q)}{|x-y|}\\
           \leq & \left(\sum_{(x,y)\in \cEe}\! \sigma(x,y)|x-y||J^\eps(x,y)|\mathds 1_{[x,y]\cap Q}\right)\left(\sum_{(\tilde x,\tilde y)\in \cEe}\!|J^\eps(\tilde x,\tilde y)|\Hm^1(Q\cap[\tilde x,\tilde y])\right)\\
           \leq & \frac{C(|J^\eps|(B(Q,R\eps))^2}{\Lm^n(Q)}
           \leq C|v|^2(\Lm^n(Q)+O(\eps)),
       \end{align*}
       where we used Lemma \ref{lem: prop} in the last estimate.
       
       To prove the lower bound, we use that $\theta_{xy}(r,s)\leq r+s$. Then employing the subadditivity of the function $g:\R^n\times [0,\infty) \times [0,\infty)\to[0,\infty]$, $g(j,r,s) = \frac{|j|^2}{r+s}$ we estimate for any competitor $(m^\eps,J^\eps)$ to the cell problem in $Q$
       \[
       \begin{aligned}
F_\eps(m^\eps,J^\eps,Q)\geq       &\lambda\sum_{(x,y)\in \cEe\cap Q\times Q}|x-y|^2\frac{J^\eps(x,y)^2}{\theta_{xy}(m^\eps(x),m^\eps(y))}\\
       \geq &\lambda \frac{\left| \sum_{(x,y)\in \cEe\cap Q\times Q} (y-x)J^\eps(x,y)\right|^2}{\sum_{(x,y)\in \cEe\cap Q\times Q} (m^\eps(x)+ m^\eps(y))}\\
       \geq &\frac{\lambda}{2\max_{x\in\cX}\deg(x) \Lm^n(Q)}|\iota J^\eps(Q)|^2 \geq c|v|^2\Lm^n(Q) - o(1),
        \end{aligned}
       \]
       where we used the fact that $\iota J^\eps$ is divergence-free, implying that $\iota J^\eps(Q) = \iota \calJ_v^\eps(Q) = \Lm^n(Q)v + o(1)$.
      
   \end{proof}

\section{Proof of Corollary \ref{cor: main}}\label{sec: proofc}

 By Lemma \ref{lem: a prioiri}, minimizing curves $(m^\eps,J^\eps)$ exist and satisfy 
 \begin{align*}
     \calAe(m^\eps,J^\eps) \leq CW_2^2(\bar m^\eps_0,\bar m^\eps_1),
 \end{align*}
 which is bounded. By Proposition \ref{prop: compactness}, a convergent subsequence $m^\eps \weakstar \rho$, $\iota J^\eps \weakstar j$ exists, where $(\rho,j)$ is a finite action curve with
\begin{equation}
     C(\rho_0,\rho_1) \leq A(\rho,j) \leq \liminf_{\eps \to 0} \calAe(m^\eps,J^\eps)
\end{equation}
by the lower bound in Theorem \ref{thm: main}.

We now show that 
\begin{equation}\label{eq: upper corollary}
\limsup_{\eps\to 0} \calAe(m^\eps,J^\eps) \leq C(\rho_0,\rho_1).
\end{equation}

By the upper bound in Theorem \ref{thm: main}, there is a sequence of curves $(n^\eps,K^\eps)$ with $W_2(n^\eps_0,\rho_0)\to 0$, $W_2(n^\eps_1, \rho_1)\to 0$, and $\limsup_{\eps \to 0} \calAe(n^\eps, K^\eps) = C(\rho_0,\rho_1)$.

We construct new curves $(o^\eps,M^\eps)$ with $o^\eps_0 = \bar m^\eps_0$, $o^\eps_1 = \bar m^\eps_1$ as follows:

  \begin{itemize}
      \item Let $(p^\eps,N^\eps)$ be the optimal curve connecting $p^\eps_0 = \bar m^\eps_0$ and $p^\eps_1 = n^\eps_0$, with action $\calAe(p^\eps,N^\eps)\leq C (W_2^2(\bar m^\eps_0, n^\eps_0)+\eps^2)\to 0$ by Lemma \ref{lem: a prioiri}.
    \item Let $(q^\eps,O^\eps)$ be the optimal curve connecting $q^\eps_0 = n^\eps_1$ and $q^\eps_1 = \bar m^\eps_1$, with action $\calAe(q^\eps,O^\eps)\leq C (W_2^2(\bar m^\eps_1, n^\eps_1)+\eps^2)\to 0$ by Lemma \ref{lem: a prioiri}.
  \end{itemize}

  Choose $\delta>0$ and construct the global curve $(o^\eps,M^\eps)$ connecting $\bar m^\eps_0$ and $\bar m^\eps_1$ as follows:
  \begin{equation}
o^\eps_t = \begin{cases}
    p^\eps_{t/\delta} &t\in [0,\delta]\\
    n^\eps_{(t-\delta)/(1-2\delta)} &t\in[\delta,1-\delta]\\
    q^\eps_{(t-(1-\delta))/\delta} &t\in[1-\delta,1],
\end{cases}
  \end{equation}
  \begin{equation}
M^\eps_t = \begin{cases}
    \frac{1}{\delta}N^\eps_{t/\delta} &t\in (0,\delta)\\
    \frac{1}{1-2\delta}K^\eps_{(t-\delta)/(1-2\delta)} &t\in(\delta,1-\delta)\\
    \frac{1}{\delta}O^\eps_{(t-(1-\delta))/\delta} &t\in(1-\delta,1).
\end{cases}
  \end{equation}

    By the change of variables formula, $(o^\eps,M^\eps)$ solves the continuity equation \eqref{eq: dcont} and
    \begin{equation}
        \calAe(o^\eps,M^\eps) = \frac{\calAe(p^\eps,N^\eps)}{\delta}+\frac{\calAe(n^\eps,K^\eps)}{1-2\delta} + \frac{\calAe(q^\eps,O^\eps)}{\delta}.
    \end{equation}
Since $(m^\eps, J^\eps)$ are minimizing curves in the same class as $(o^\eps, M^\eps)$,
\begin{equation}
    \limsup_{\eps\to 0} \calAe(m^\eps,J^\eps) \leq \limsup_{\eps\to 0}\calAe(o^\eps,M^\eps) = \limsup_{\eps\to 0}\frac{\calAe(n^\eps,K^\eps)}{1-2\delta} = \frac{C(\rho_0,\rho_1)}{1-2\delta}.
\end{equation}
Since $\delta>0$ is arbitrary, \eqref{eq: upper corollary} follows.

\section{Proof of Theorem \ref{thm: main}}\label{sec: proofs}

\subsection{Proof of the lower bound} \label{sec: lower}
We have to show that for every sequence $m^\eps\subset \R^\cXe_+$, $J^\eps\subset \R^{\cEe}_a$ such that $(m^\eps,\iota J^\eps) \weakstar (\rho,j)$ we have
\begin{align*}
    \liminf_{\eps\to0}\calAe(m^\eps,J^\eps)\geq A(\rho,j).
\end{align*}

We first state a lower bound for the energy at a single time for specific limit measures, which occur naturally as tangent measures after a blow-up procedure. The proposition is an adaptation of \cite[Proposition 10.3]{gladbach2024stochastic} to the quadratic case (note the quadratic error term):

\begin{prop}\label{prop: lower bound tangent measure}
    Let $j_0\in\R^n\setminus \{0\}$, $Q_{j_0,\alpha}$ a closed orthotope with one side parallel to $j_0$ of length one and all other sides of length at most $\alpha$.
    
    Let $(m^\eps,\iota J^\eps,|\iota J^\eps|,\dive \iota J^\eps) \weakstar (1,j_0,c,0)\tau$ narrowly in $Q_{j_0,\alpha}$, where $\tau\in \calP(Q_{j_0,\alpha})$ satisfies $\tau(\partial Q_{j_0,\alpha}) = 0$ and $c>0$. Then
    \begin{equation}
        \liminf_{\eps \to 0} F_\eps(m^\eps,J^\eps,Q_{j_0,\alpha}) \geq f(j_0) - C\sqrt{\alpha}|j_0|^2,
    \end{equation}
    where $C$ depends only on $R,n,\lambda,\Lambda,$ and the maximum degree.
\end{prop}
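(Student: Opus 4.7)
The strategy is a cell-formula competitor argument. I would modify $(m^\eps,J^\eps)$ on a thin collar around $\partial Q_{j_0,\alpha}$ into a valid competitor for the cell formula on $Q := Q_{j_0,\alpha}$ with boundary flux density $v := j_0/\alpha^{n-1}$. This choice of $v$ matches the observed total flux, since $\iota J^\eps(Q) \to j_0\tau(Q) = j_0$ while $\iota\calJ_v^\eps(Q) \to v\Lm^n(Q) = j_0$. The cell formula then yields
\[
F_\eps(\tilde m^\eps,\tilde J^\eps,Q) \geq \Lm^n(Q)\,f(v) - o_\eps(1) = f(j_0)/\alpha^{n-1} - o_\eps(1)
\]
by the $2$-homogeneity of $f$, which already exceeds $f(j_0)$ for $\alpha \leq 1$; subtracting a modification cost of order $\sqrt{\alpha}|j_0|^2$ will yield the claim.

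First I would run a Fubini-type averaging argument over the collar thickness, exploiting $\tau(\partial Q) = 0$ and $\dive J^\eps \weakstar 0$, to select a $\gamma \in (0,\alpha)$ (ultimately of order $\alpha^{n+1/2}$) for which the $\gamma$-collar $N_\gamma \subset Q$ of $\partial Q$ has $|J^\eps|(N_\gamma) \to 0$ and $|\dive J^\eps|(N_\gamma) \to 0$. Setting $Q' := Q \setminus N_\gamma$, I define $\tilde J^\eps$ to agree with $J^\eps$ on edges wholly inside $Q'$ and with $\calJ_v^\eps$ on edges touching $N_\gamma$, and I take $\tilde m^\eps = m^\eps$ on $Q'$ together with the optimizing construction from the proof of the upper bound in Lemma~\ref{lemma: f} on $N_\gamma$, followed by a rescaling to total mass $1$ (which perturbs the energy by a harmless factor $1+o(1)$ since $\tau(Q)=1$). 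Using Lemma~\ref{lem: prop}(2) on a decomposition of $N_\gamma$ into thin slabs, $|\iota \calJ_v^\eps|(N_\gamma) \leq C|v|\Lm^n(N_\gamma) \leq C|j_0|\gamma/\alpha$, so the collar's contribution to $F_\eps(\tilde m^\eps, \tilde J^\eps, \cdot)$ is bounded, via the Lemma~\ref{lemma: f} upper-bound argument, by $C(|j_0|\gamma/\alpha)^2$, which for $\gamma = \alpha^{n+1/2}$ is at most $C\sqrt{\alpha}|j_0|^2$ whenever $n\geq 1$.

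The main technical obstacle is repairing the divergence at the interface $\partial N_\gamma$. The spliced flow $\tilde J^\eps$ has residual divergence at each interface vertex $x$ equal to $\sum_{y\in Q'}[J^\eps(x,y)-\calJ_v^\eps(x,y)]$, whose global total variation is bounded by $|J^\eps|(N_\gamma)+|\iota\calJ_v^\eps|(N_\gamma)\leq o_\eps(1)+C|j_0|\gamma/\alpha$. I would absorb this residual by a local divergence-correcting flow routed along paths of length $O(R)$ granted by Assumption~\ref{ass: graph}(1), entirely analogous to the construction of the uniform representatives $\calJ_v^\eps$ in Definition~\ref{def: uniform}. The correction has total variation $O(|j_0|\gamma/\alpha)$, and its energy, again estimated via the Lemma~\ref{lemma: f} upper-bound construction, is $\leq C(|j_0|\gamma/\alpha)^2 \leq C\sqrt{\alpha}|j_0|^2$. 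Combining the cell-formula lower bound on $Q$ with the collar-modification and divergence-correction costs, discarding the nonnegative term $F_\eps(m^\eps, J^\eps, N_\gamma)$ of the original energy, and using $f(j_0)/\alpha^{n-1}\geq f(j_0)$ for $\alpha\leq 1$, gives $\liminf_{\eps\to 0}F_\eps(m^\eps,J^\eps,Q)\geq f(j_0)-C\sqrt{\alpha}|j_0|^2$.
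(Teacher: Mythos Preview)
Your proposal has a genuine gap in the divergence-correction step. You assert that for a well-chosen collar width $\gamma$ of order $\alpha^{n+1/2}$ one has $|J^\eps|(N_\gamma)\to 0$. This is false: by hypothesis $|J^\eps|\weakstar c\tau$ narrowly, so for any fixed $\gamma$ with $\tau(\partial N_\gamma)=0$ one gets $|J^\eps|(N_\gamma)\to c\tau(N_\gamma)>0$ in general. The only constraint on $\tau$ is $\tau(\partial Q)=0$, which provides no quantitative decay of $\tau(N_\gamma)$ in terms of $\alpha$; the tangent measure may place almost all of its mass in an arbitrarily thin collar, and $c$ is unrelated to $|j_0|$. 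Consequently your interface divergence has total variation of order $c\tau(N_\gamma)$, and the correcting flow---which must be routed along paths of macroscopic length, not $O(R\eps)$, since positive and negative residues can sit at opposite ends of $Q$---has TV and energetic cost that cannot be bounded by $C\sqrt{\alpha}|j_0|^2$. You also do not address the interior divergence $\dive J^\eps$ on $Q'$, which is nonzero and must be removed for a cell-formula competitor.

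The paper's argument avoids both issues by controlling the Kantorovich--Rubinstein norm of the divergence rather than its total variation. Instead of a sharp splice it uses a smooth cutoff $\tilde J^\eps=\eta\pentagram J^\eps+(1-\eta)\pentagram\calJ^\eps_{j_0}$, and the key Lemma~\ref{lemma: cutoff} shows that for a tensorized cutoff $\eta=\eta_{\mathrm{hori}}\otimes\eta_{\mathrm{vert}}$ the limiting divergence $\dive\big(j_0(\eta\tau+(1-\eta)U)\big)$ has KR norm at most $C|j_0|\alpha$: the horizontal cutoff contributes nothing since $j_0\cdot\nabla\eta_{\mathrm{hori}}=0$, and the vertical contribution is controlled solely by the transverse diameter $\alpha$, independently of where $\tau$ concentrates. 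The corrector $K^\eps$ is then built from a $W_1$-optimal coupling, so that $|K^\eps|_{\TV}\leq C\|\dive\tilde J^\eps\|_{\KR}+C\eps|\dive\tilde J^\eps|_{\TV}$. It is precisely the KR norm, not the TV norm, that sees the thin geometry of $Q_{j_0,\alpha}$ and delivers the factor $\alpha$ your sharp-splice argument cannot recover.
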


We postpone the proof for now. Figure \ref{fig: hori} illustrates the orthotope $Q_{j_0,\alpha}$ and the limit flux $j_0\tau$.

With Proposition \ref{prop: lower bound tangent measure} we can deduce the lower bound in Theorem \ref{thm: main}:
\begin{proof}[Proof of the lower bound]
Without loss of generality we may assume
\begin{align*}
    \sup_{\eps>0}\calAe(m^\eps,J^\eps)<\infty.
\end{align*}
First replace $(m^\eps,J^\eps)$ with versions mollified in time, by first extending $m^\eps$ constantly and $J^\eps$ by $0$ outside of $(0,T)$, and then convolving both with a smooth test function in time. We shall not rename $(m^\eps,J^\eps)$, noting that the action is decreased by convolution due to Jensen's inequality and Fubini's theorem by convexity of $\calAe$. We refer the reader to \cite[Lemma 7.7]{gladbach2023homogenisation} for more details, where a similar construction is considered.

By the compactness result Proposition \ref{prop: compactness}, $m^\eps_t\weakstar \rho_t, \iota J^\eps_t\weakstar j_t, \dive(\iota J^\eps_t)\weakstar \dive(j_t)$ narrowly for every(!) $t\in[0,T]$, with $j_t \ll \rho_t$.

We will show for every $t\in[0,T]$ that
\begin{equation}\label{eq: single time lower bound}
    \int_{\R^n} f\left(\frac{dj_t}{d\rho_t}\right)\,d\rho_t \leq \liminf_{\eps \to 0} F_\eps(m^\eps_t,J^\eps_t) .
\end{equation}

The inequality for the actions
\begin{equation}
    \int_0^T\int_{\R^n} f\left(\frac{dj_t}{d\rho_t}\right)\,d\rho_t\,dt \leq \liminf_{\eps \to 0} \calA_\eps(m^\eps,J^\eps) 
\end{equation}
then follows from Fatou's Lemma.

In order to show \eqref{eq: single time lower bound}, we employ the blow-up method: Define the energy density measures $\nu_t^\eps = F_\eps(m^\eps_t,J^\eps_t,\cdot)\in \M_+(\R^n)$. Since the energies are uniformly bounded, we obtain by De La Vall\'ee Poussin's theorem that $\nu_t^\eps\weakstar \nu_t$ vaguely for a subsequence (not relabelled). 

Take any point $x_0\in \supp \rho_t$ with the following properties:
\begin{enumerate}
    \item $\displaystyle{\limsup_{r\to 0} \frac{|j_t|(B(x_0,r))+|\dive j_t|(B(x_0,r))+\nu_t(B(x_0,r))}{\rho_t(B(x_0,r))}<\infty}$
    \item $\displaystyle{\lim_{r\to 0} \frac{j_t(x_0+rA)}{\rho_t(x_0+rA)} =\frac{dj_t}{d\rho_t}(x_0) = j_0\in\R^n}$, $\displaystyle{\lim_{r\to 0} \frac{|\dive j_t|(x_0+rA)}{\rho_t(x_0+rA)} < \infty }$ and \\
    $\displaystyle{\lim_{r\to 0} \frac{\nu_t(x_0+rA)}{\rho_t(x_0+rA)}=\frac{d\nu_t}{d\rho_t}<\infty}$ for all $A\subseteq \R^n$ open, convex, bounded and containing the origin
    \item There is a sequence $r_k\to 0$ and a tangent probability measure $\tau\in \calP(Q_{j_0,\alpha})$ such that
    \begin{equation}
    \frac{\rho_t(x_0+r_k \cdot)}{\rho_t(x_0+r_k Q_{j_0,\alpha})} \weakstar \tau, \quad \frac{j_t(x_0+r_k \cdot)}{\rho_t(x_0+r_kQ_{j_0,\alpha})} \weakstar j_0\tau
    \end{equation}
    narrowly in $Q_{j_0,\alpha}$ and $\tau(\partial Q_{j_0,\alpha})=0$, where $Q_{j_0,\alpha}$ is a closed orthotope as in Proposition \ref{prop: lower bound tangent measure}
    \item $\displaystyle{ \frac{\dive(j_t(x_0+r_k \cdot))}{\rho_t(x_0+r_k Q_{j_0,\alpha})}= r_k \frac{(\dive j_t)(x_0+r_k \cdot))}{\rho_t(x_0+r_k Q_{j_0,\alpha})} \to 0}$ in total variation.
\end{enumerate}

By the refined Besicovitch Differentiation Theorem \cite[Proposition 2.2]{ambrosio1992relaxation} $\rho_t$-almost every $x_0$ satisfies (1)(2). (4) immediately follows from the chain rule and (2). (3) holds for $\rho_t$-almost every $x_0$ by the fundamental property of tangent measures \cite[Theorem 2.44] {ambrosio2000functions} and \cite[Lemma 3.1]{rindler2012lower}.

Now choose a diagonal sequence $\eps_k\to 0$ fast enough that
\begin{enumerate}
    \item[(5)] $s_k=\frac{\eps_k}{r_k}\to 0$
    \item[(6)] $\displaystyle{\frac{m_t^\eps(x_0+r_k \cdot)}{\rho_t(x_0+r_k Q_{j_0,\alpha})} \weakstar \tau}$, $\displaystyle{\frac{\iota J_t^\eps(x_0+r_k \cdot)}{\rho_t(x_0+r_kQ_{j_0,\alpha})} \weakstar j_0\tau}$\quad narrowly in $Q_{j_0,\alpha}$
    \item[(7)] $\displaystyle{\frac{F_{\eps}(m^\eps_t,J^\eps_t, x_0+r_kQ_{j_0,\alpha})}{\rho_t(x_0+r_k Q_{j_0,\alpha})} \to \frac{d\nu_t}{d\rho_t}(x_0)}$
    \item [(8)]  $\displaystyle{ \dive\left(\frac{\iota J^\eps_t(x_0+r_k \cdot)}{\rho_t(x_0+r_k Q_{j_0,\alpha})} \right) \to 0}$ in total variation.
\end{enumerate}

Define on the rescaled and translated graph $(\cX_{s_k},\cE_{s_k})$ the pair $n^{s_k}\in [0,\infty)^{\cX_{s_k}}$, $K^{s_k}\in \R_a^{\cE_{s_k}}$,
\[
n^{s_k}(y) = \frac{m^\eps_t(x_0+r_k y)}{\rho_t(x_0+r_k Q_{j_0,\alpha})}, \quad K^{s_k}(y,y') = \frac{J^\eps_t(x_0+r_k y, x_0+r_ky')}{\rho_t(x_0+r_kQ_{j_0,\alpha})}.
\]

Then $(n^{s_k},\iota K^{s_k},\dive \iota K^{s_k}) \weakstar (1,j_0,0)\tau$ narrowly in $Q_{j_0,\alpha}$, and by Proposition \ref{prop: lower bound tangent measure}
    \begin{equation}
    \begin{aligned}
&f(j_0) - C\alpha|j_0|^2 \leq \liminf_{s_k \to 0} F_{s_k}(n^{s_k},J^{s_k},Q_{j_0,\alpha}) \\
&= \liminf_{k\to \infty} \frac{F_{\eps_k}(m^\eps,J^\eps,x_0+r_k Q_{j_0,\alpha})}{\rho_t(x_0+r_kQ_{j_0,\alpha})} =\frac{d\nu_t}
{d\rho_t}(x_0).  \end{aligned}  \end{equation}

Since $\alpha>0$ was arbitrary, we have
\begin{equation}
    f\left(\frac{dj_t}{d\rho_t}\right) \leq \frac{d\nu_t}{d\rho_t}
\end{equation}
$\rho_t$-almost everywhere. By integration
\begin{equation}
    \int_{\R^n} f\left(\frac{dj_t}{d\rho_t}\right)\,d\rho_t \leq \int_{\R^n} \frac{d\nu_t}{d\rho_t}\,d\rho_t \leq \nu_t(\R^n) \leq \liminf_{\eps\to 0} F_\eps(m^\eps_t,J^\eps_t),
\end{equation}
which is \eqref{eq: single time lower bound}, finishing the proof.
\end{proof}

Before we prove Proposition \ref{prop: lower bound tangent measure}, we state the following lemma, which allows us to interpolate between a horizontally nonconstant tangent measure $j_0\tau\in \M(Q_{j_0,\alpha};\R^n)$ and the constant vector field $j_0$ near $\partial Q_{j_0,\alpha}$ without incurring a lot of divergence, which is possible precisely because $Q_{j_0,\alpha}$ is thin.  The cutoff procedure is adapted from the proof of \cite[Proposition 10.3]{gladbach2024stochastic}. See Figure \ref{fig: hori} for an illustration of the cutoff procedure.

\begin{lemma}[Massaging of the tangent measure]\label{lemma: cutoff}
    Let $\tau,j_0,\alpha$ be as above. Let $U=\frac{\Lm^n}{\Lm^n(Q_{j_0,\alpha})}\in \calM_+(\R^n)$
     be the Lebesgue measure normalized by the volume of $Q_{j_0,\alpha}$.  Then for any $\delta>0$ there is a cutoff function $\eta\in C_c^\infty(Q_{j_0,\alpha};[0,1])$ such that
    \begin{enumerate}
        \item $\eta(x) =1 $ whenever $x\in  Q_{j_0,\alpha}$ with $\dist(x,\partial Q_{j_0,\alpha}) \geq \delta$ 
        \item $\| \dive \left(j_0 \left(\eta \tau + (1-\eta)U\right)\right) \|_{\KR} \leq C|j_0|\alpha$,
    \end{enumerate}
where $C$ depends only on $R$ and $n$.
\end{lemma}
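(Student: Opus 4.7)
My plan is to exploit the two distinguishing features of $Q_{j_0,\alpha}$: it has length $1$ in the $j_0$-direction but length at most $\alpha$ in the other $n-1$ directions. Place coordinates so that $j_0 = |j_0| e_1$ and $Q_{j_0,\alpha}=[0,1]\times [0,\alpha]^{n-1}$. I will take a cutoff depending only on the long variable, $\eta(x)=\eta_1(x_1)$, with $\eta_1\in C_c^\infty((0,1);[0,1])$ a standard 1D cutoff equal to $1$ on $[\delta,1-\delta]$, vanishing outside $(0,1)$, and with $\int_0^1|\eta_1'|\,dx_1\leq 2$. Condition (1) is then automatic, since $\{\dist(x,\partial Q_{j_0,\alpha})\geq \delta\}\subseteq \{x_1\in[\delta,1-\delta]\}$. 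Because $j_0\cdot\nabla\eta=|j_0|\,\eta_1'(x_1)$, only one-dimensional derivatives enter the divergence.

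The key identity I want is that, for every $1$-Lipschitz test function $\varphi$ vanishing on $\partial Q_{j_0,\alpha}$,
\begin{equation*}
\bigl\langle\dive\bigl(j_0(\eta\tau+(1-\eta)U)\bigr),\varphi\bigr\rangle = |j_0|\int \eta_1'(x_1)\,\varphi(x)\,\bigl(d\tau - dU\bigr).
\end{equation*}
I derive this by two distributional integrations by parts. For the $(1-\eta)U$ piece, $U=|Q|^{-1}\Lm^n$ is constant, so integrating $\int(1-\eta_1)\partial_{x_1}\varphi\,dU$ by parts in $x_1$ kills the boundary terms (since $\varphi|_{\partial Q}=0$) and produces $\int \eta_1'\varphi\,dU$. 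For the $\eta\tau$ piece, writing $\eta\partial_{x_1}\varphi = \partial_{x_1}(\eta\varphi) - \eta_1'\varphi$, I use the (implicit) hypothesis $\dive(j_0\tau)=0$ in $Q^\circ$ inherited from $\dive J^\eps\weakstar 0$ in Proposition \ref{prop: lower bound tangent measure} to kill $\int\partial_{x_1}(\eta\varphi)\,d\tau$, leaving $-\int \eta_1'\varphi\,d\tau$.

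For the estimate, decompose an arbitrary $1$-Lipschitz $\varphi$ vanishing on $\partial Q_{j_0,\alpha}$ as $\varphi(x)=\bar\varphi(x_1)+\tilde\varphi(x)$, where $\bar\varphi(x_1)=\alpha^{-(n-1)}\int_{[0,\alpha]^{n-1}}\varphi(x_1,x')\,dx'$ is the cross-sectional average. Since $\varphi$ is $1$-Lipschitz and $x'$ ranges over $[0,\alpha]^{n-1}$, we have $\|\tilde\varphi\|_\infty\leq \sqrt{n-1}\,\alpha$. The $\bar\varphi$-contribution reduces by Fubini to a purely one-dimensional integral against $d\tau_1-dU_1$, where $\tau_1,U_1$ are the $x_1$-marginals; the crucial observation is that the assumption $\dive(j_0\tau)=0$ together with $\tau(Q_{j_0,\alpha})=1$ forces the cross-sectional mass of $\tau$ to be constant in $x_1$, hence $\tau_1=U_1=\Lm^1|_{[0,1]}$, and this piece vanishes. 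The $\tilde\varphi$-contribution is bounded by
\begin{equation*}
|j_0|\,\|\tilde\varphi\|_\infty\int|\eta_1'|\,d(\tau+U)\leq |j_0|\,\sqrt{n-1}\,\alpha\cdot 2\int_0^1|\eta_1'|\,dx_1\leq C|j_0|\alpha,
\end{equation*}
using $\tau_1=U_1$ once more to reduce $\int|\eta_1'|\,d\tau$ to a 1D integral.

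The main obstacle I expect is twofold. First, the identity in Step 2 requires justifying the integration by parts against the Radon measure $\tau$; this is done by approximating $\eta\varphi$ by compactly supported smooth functions in $Q^\circ$ and invoking $\dive(j_0\tau)|_{Q^\circ}=0$. Second, the bound collapses without restricting the test class: $\dive(j_0 U)$ as a distribution on $\R^n$ has full $\KR$ mass of order $|j_0|$ due to the boundary flux through $\{x_1=0,1\}$, so one must interpret $\|\cdot\|_{\KR}$ here as dual to Lipschitz functions vanishing on $\partial Q_{j_0,\alpha}$ (equivalently, view $\dive(\cdots)$ as a distribution on the open orthotope). The unstated hypothesis $\dive(j_0\tau)=0$, read from the surrounding Proposition \ref{prop: lower bound tangent measure}, is the true workhorse of the estimate.
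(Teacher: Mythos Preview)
Your cutoff $\eta(x)=\eta_1(x_1)$ is not in $C_c^\infty(Q_{j_0,\alpha})$: it does not vanish near the lateral faces $\{x_i\in\{0,\alpha\}\}$, $i\geq 2$. The compact support in the full interior is not a cosmetic requirement --- in the application to Proposition~\ref{prop: lower bound tangent measure} the cutoff is used to force $\tilde J^\eps=\calJ^\eps_{j_0}$ on all edges near $\partial Q_{j_0,\alpha}$, and with your $\eta$ this fails along the thin sides. Your suggested fix, reinterpreting $\|\cdot\|_{\KR}$ as dual only to Lipschitz functions vanishing on $\partial Q$, does not repair this: the downstream KR estimate is applied to $\dive\tilde J^\eps$ with the standard KR norm, and in any case the lemma as stated asks for a genuinely compactly supported $\eta$.

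The paper handles this by taking a product cutoff $\eta=\eta_{\mathrm{hori}}\otimes\eta_{\mathrm{vert}}$. The horizontal factor satisfies $j_0\cdot\nabla\eta_{\mathrm{hori}}=0$, so it contributes no divergence and one is left with $\dive\bigl(j_0(\eta\tau+(1-\eta)U)\bigr)=j_0\cdot\nabla\eta_{\mathrm{vert}}\,(\tau_{\mathrm{hori}}-U)$, where $\tau_{\mathrm{hori}}=\eta_{\mathrm{hori}}\tau+(1-\eta_{\mathrm{hori}})U$. Your clean identity $\tau_1=U_1$ (which is correct, and a nice observation) no longer survives: $\tau_{\mathrm{hori}}$ is not a probability measure, and its $x_1$-marginal differs from $U_1$ by the mass defect $\tau_{\mathrm{hori}}(Q)-1$. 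The paper absorbs this defect by choosing the horizontal transition scale small enough, using $\tau(\partial Q)=0$, so that $|\tau_{\mathrm{hori}}(Q)-1|\leq\alpha$. After that the splitting of the test function into a vertical part $\bar\phi$ and an $O(\alpha)$ remainder $\tilde\phi$ proceeds much as you outlined. In short, your argument is missing exactly the horizontal cutoff and the bookkeeping of the mass defect it creates; once you put those in you recover the paper's proof.
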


\begin{proof}
    We define $\eta = \eta_\mathrm{hori}\otimes  \eta_\mathrm{vert}$, where $\eta_\mathrm{hori} \in C_c^\infty(\langle j_0\rangle^\perp)$ is a horizontal cutoff function at transition length scale $l_\mathrm{hori}>0$ and  $\eta_\mathrm{vert} \in C_c^\infty(\langle j_0\rangle)$ is a vertical cutoff function at transition length scale $l_\mathrm{vert}>0$. 
    Specifically, write $Q_{j_0,\alpha} = Q_{j_0,\alpha}^\mathrm{hori} \oplus Q_{j_0}^\mathrm{vert}$. Choose $\eta_\mathrm{hori}\in C_c^\infty ( Q_{j_0,\alpha}^\mathrm{hori} )$ with $\eta_\mathrm{hori}(x) = 1$ whenever $\dist(x,\langle j_0\rangle^\perp \setminus Q_{j_0,\alpha}^\mathrm{hori}) \geq l_\mathrm{hori}$ and $\eta_\mathrm{vert}\in C_c^\infty ( Q_{j_0,\alpha}^\mathrm{vert} )$ with $\eta_\mathrm{vert}(y) = 1$ whenever $\dist(y,\langle j_0\rangle \setminus Q_{j_0,\alpha}^\mathrm{vert}) \geq l_\mathrm{vert}$.
    
\begin{figure}
  \begin{center}
    \includegraphics{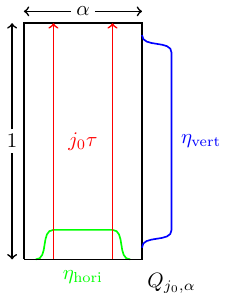}
    \end{center}
     \caption{A thin orthotope pointing in direction $j_0 = e_2$, with cutoff function $\eta(x_1,x_2) = \eta_\mathrm{hori}(x_1) \eta_\mathrm{vert}(x_2)$, and a divergence-free tangent measure $j_0 \tau$ supported on two vertical lines. }
     \label{fig: hori}.
\end{figure}

    Clearly (1) will be satisfied as long as $l_\mathrm{hori},l_\mathrm{vert}\leq \delta/2$.
    
      To check (2), first set $\tau_{\mathrm{hori}} = \eta_\mathrm{hori}\tau + (1-\eta_\mathrm{hori})U\in\M_+(Q_{j_0,\alpha})$.
    Then
    \begin{equation}
       \dive \left(j_0 \tau_\mathrm{hori}\right)  = \eta_\mathrm{hori} \underbrace{\dive(j_0\tau)}_{0} + (1-\eta_\mathrm{hori})\underbrace{\dive(j_0U)}_{0} +  \underbrace{j_0 \cdot \nabla \eta_{\mathrm{hori}}}_{0} (\tau - U) = 0.
    \end{equation}

   A direct calculation shows that $\eta \tau + (1-\eta)U= \eta_\mathrm{vert} \tau_\mathrm{hori} + (1-\eta_\mathrm{vert})U$. Multiplying with $j_0$ and taking the divergence yields
    \begin{equation}
    \begin{aligned}
    &\dive \left(j_0 \left(\eta \tau + (1-\eta)U\right)\right)\\
    = & \eta_\mathrm{vert} \underbrace{\dive(j_0\tau_\mathrm{hori})}_{0} + (1-\eta_\mathrm{vert})\underbrace{\dive(j_0U)}_{0} + j_0 \cdot \nabla \eta_\mathrm{vert} (\tau_\mathrm{hori} - U)\\
    = &j_0 \cdot \nabla \eta_\mathrm{vert} (\tau_\mathrm{hori} - U)
    \end{aligned}
    \end{equation}
    Note that unlike in the horizontal cutoff, the scalar product $j_0\cdot \nabla \eta_\mathrm{vert}$ is not zero but only depends on the vertical variable, whereas $\tau_\mathrm{hori} - U= (\tilde \tau_\mathrm{hori} - \tilde U)_x\otimes \Lm^1_y$ depends only on the horizontal variable $x$.

    To bound the KR-norm, consider a $1$-Lipschitz test function $\phi\in C_c^\infty(\R^n)$, which we split up into $\phi(x,y) = \bar \phi(y) + \tilde \phi(x,y)$, where $\bar \phi(y) = \phi(x_0,y)$, $\tilde \phi(x,y) = \phi(x,y) - \phi(x_0,y)$ for some $x_0\in R^\mathrm{hori}_{j_0,\alpha}$. Then
    \begin{equation}
    \begin{aligned}
    &|\langle\dive \left(j_0 \left(\eta \tau + (1-\eta)U\right)\right), \phi \rangle|\\
    = &|\langle j_0 \cdot \nabla \eta_\mathrm{vert} (\tau_\mathrm{hori} - U), \phi\rangle|\\
    \leq &|\langle j_0 \cdot \nabla \eta_\mathrm{vert} (\tau_\mathrm{hori} - U), \bar \phi\rangle| + |\langle j_0 \cdot \nabla \eta_\mathrm{vert} (\tau_\mathrm{hori} - U), \tilde \phi\rangle|.
    \end{aligned}
    \end{equation}

    The first pairing factorizes into horizontal and vertical terms:
        \begin{equation}
    \begin{aligned}
  &\langle j_0 \cdot \nabla \eta_\mathrm{vert} (\tau_\mathrm{hori} - U), \bar \phi\rangle \\
  =&\left(\int_{Q_{j_0}^\mathrm{vert}} j_0 \cdot \nabla \eta_\mathrm{vert}(y) \bar \phi(y)\,dy\right) \left(\tilde \tau_\mathrm{hori} - U\right)(Q_{j_0,\alpha}^\mathrm{hori})\\
  = & -\left(\int_{Q_{j_0}^\mathrm{vert}}\eta_\mathrm{vert}(y)  j_0 \cdot \nabla \bar \phi(y)\,dy\right) \left( \tau_\mathrm{hori}(Q_{j_0,\alpha}) - 1\right).
    \end{aligned}
    \end{equation}
    The absolute value of the vertical integral is bounded by $|j_0|$ since $\bar \phi$ is $1$-Lipschitz and the length of $Q_{j_0}^\mathrm{vert}$ is $1$. The horizontal difference tends to zero as $l_\mathrm{hori}\to 0$ by dominated convergence, since $\tau(\partial Q_{j_0,\alpha})  = 0.$ Thus the absolute value of the paring is at most $\alpha|j_0|$ as long as $l_\mathrm{hori}$ is small enough.

    We turn our attention to the second pairing and use the estimate $|\tilde \phi|\leq C\alpha$ in $Q_{j_0,\alpha}$, since $\tilde \phi$ is $1$-Lipschitz and zero on a vertical line segment and the diameter of $Q_{j_0,\alpha}^\mathrm{hori}$ is bounded by $C\alpha$:

            \begin{equation}
    \begin{aligned}
  &|\langle j_0 \cdot \nabla \eta_\mathrm{vert} (\tau_\mathrm{hori} - U), \tilde  \phi\rangle| \\
  \leq&\left( \int_{Q_{j_0}^\mathrm{vert}} |j_0||\nabla \eta_\mathrm{vert}(y)|\,dy\right) (\tilde \tau_\mathrm{hori} +  \tilde U)(Q_{j_0,\alpha}^\mathrm{hori})C\alpha\\
  \leq & |j_0|4C\alpha,
    \end{aligned}
    \end{equation}
    finishing the proof.
\end{proof}

Note that in the proof above the bound on the KR-norm of the divergence depends only on $\alpha$ and $l_\mathrm{hori}$ and not at all on $l_\mathrm{vert}$.

We finally prove Proposition \ref{prop: lower bound tangent measure}.

\begin{proof}[Proof of Proposition \ref{prop: lower bound tangent measure}]
    Before we begin, let us note that if $\dive \iota J^\eps = 0$ and $J^\eps = \calJ^\eps_{j_0}$ near $\partial Q_{j_0,\alpha}$, then $(m^\eps/\beta^\eps,J^\eps)$ are competitors to the cell problem for $j_0$ in the orthotope $Q_{j_0,\alpha}$, where $\beta^\eps = \sum_{x\in Q_{j_0,\alpha}} m^\eps(x) \to 1$. By definition of $f$ we then have
    \begin{equation}
        f(j_0) \leq \liminf_{\eps\to 0} F_\eps(m^\eps/\beta^\eps, J^\eps, Q_{j_0,\alpha}) =\liminf_{\eps\to 0} F_\eps(m^\eps, J^\eps, Q_{j_0,\alpha}),
    \end{equation}
    which is the claim.

    The proof consists of modifying the flow $J^\eps$ in two steps in order to ensure the boundary conditions and the divergence constraint respectively. In the third step we modify $m^\eps$ as well in order to bound the energy.

    \emph{Step 1: Boundary values. }
    Fix $\delta>0$. Let $\eta = \eta_\delta \in C_c^\infty(Q_{j_0,\alpha})$ be a cutoff function as defined in Lemma \ref{lemma: cutoff}. 
    Define $\tilde J^\eps = \eta \pentagram J^\eps + (1-\eta)\pentagram \calJ^\eps_{j_0} \in \R^{\cEe}_a$, 
    where $\pentagram$ is given by 
    \begin{align*}
    (\eta\pentagram J) (x,y)=\frac{\eta(x)+\eta(y)}{2}J(x,y),
\end{align*}
see Section \ref{appendix}.
    
    By our choice of cutoff function, we have
    \begin{equation}
        \lim_{\delta \to 0} \lim_{\eps\to 0}|\iota\tilde J^\eps - \iota J^\eps|(Q_{j_0,\alpha}) = 0,
    \end{equation}
    and by Lemma \ref{lemma: star}(2)
\begin{equation}
   \dive\iota  \tilde J^\eps \weakstar \dive(j_0(\eta\tau + (1-\eta)U))= \nabla \eta \cdot (j_0\tau - j_0U). 
\end{equation}
By the uniform boundedness principle \cite[Theorem 2.11]{rudin1991functional} and Riesz theorem \cite[Theorem 1.54]{ambrosio2000functions}
    \begin{equation}
        \sup_{\delta>0} \limsup_{\eps\to 0} |\dive \iota \tilde J^\eps|(Q_{j_0,\alpha}) < \infty.
    \end{equation}

By the compact embedding of narrow convergence on the compact set $Q_{j_0,\alpha}$ into $\KR$-convergence (Lemma \ref{lemma: KR}), we infer from the narrow convergence that
\begin{equation}
    \lim_{\eps \to 0} \|\dive \iota \tilde J^\eps - \dive(j_0(\eta\tau + (1-\eta)U)\|_\KR = 0
\end{equation}
for every $\delta >0$. Using Lemma \ref{lemma: cutoff} and the triangle inequality yields
    \begin{equation}
        \sup_{\delta>0} \limsup_{\eps\to 0} \|\dive \iota\tilde J^\eps\|_{\KR} \leq C\alpha|j_0|.
    \end{equation}
    
    \emph{Step 2: Corrector vector field. }
    In order to get rid of the divergence of $\tilde J^\eps$ we define $\hat J^\eps$ to be
    \begin{align*}
        \hat J^\eps=\tilde J^\eps+K^\eps,
    \end{align*}
    where $K^\eps\in\R^{\cEe}_a$ is a corrector vector field satisfying
    \begin{align*}
        \begin{cases}
            &\dive \iota K^\eps =-\dive \iota \tilde J^\eps\\
            &|\iota K^\eps|(Q_{j_0,\alpha})\leq C\|\dive \iota\tilde J^\eps\|_{\KR} + C\eps|\dive\iota \tilde J_\eps|(Q_{j_0,\alpha})\\
            &K^\eps=0 \text{ whenever} \dist([x,y],\del Q_{j_0,\alpha})\leq\dist(\supp(\eta),\del Q_{j_0,\alpha})-C\eps.
        \end{cases}
    \end{align*}

    To show the existence of $K^\eps$, first find a $W_1$-optimal coupling $\gamma\in \calM_+(Q_{j_0,\alpha}\times Q_{j_0,\alpha})$ between the positive measures of equal mass $(-\dive \iota\tilde J^\eps)_-$ and $(-\dive \iota\tilde J^\eps)_+$, so that $\langle \gamma, |y-x|\rangle =  \|\dive \iota \tilde J^\eps\|_{\KR}$.

Now for every pair $(x,y)\in \supp(\gamma) \subseteq \supp(\eta)\times \supp(\eta) 
\cap \cXe \times \cXe$, find a path $P(x,y)$ in $(\cXe,\cEe)$ of length at most $C(R)|y-x| + C(R)\eps$, which does not stray more than $C(R)\eps$ from the line segment $[x,y]$. These paths exist by Assumption \ref{ass: graph}.

Finally, define $K^\eps = \sum_{(x,y)\in\supp \gamma} \gamma(x,y) J_{P(x,y)}$. Since $\dive \iota J_{P(x,y)} = \delta_y - \delta_x$ and $\gamma$ is a coupling, we have $\dive\iota K^\eps = (-\dive \iota \tilde J^\eps)_+ - (-\dive\iota  \tilde J^\eps)_- = -\dive\iota  \tilde J^\eps$.

The previous estimates induce the following crucial properties of $\hat J^\eps$:
\begin{align*}
    \begin{cases}
        \limsup_{\delta\to0}\limsup_{\eps\to0}|\iota\hat J^\eps-\iota J^\eps|(Q_{j_0,\alpha})\leq C \alpha|j_0|,\\
        \dive \iota \hat J^\eps=0,\\
        \hat J^\eps(x,y)=\calJ^\eps_{j_0}(x,y) \text{ whenever} \dist([x,y],\del Q_{j_0,\alpha})\leq\dist(\supp(\eta),\del Q_{j_0,\alpha})-C\eps.
    \end{cases}
\end{align*}
    \emph{Step 3: Bound on the energy. }
    Finally we need to bound the energy. For this we replace $m^\eps$ by $\hat m^\eps$ defined as
    \begin{align*}
        \hat m^\eps(x)=\left(1-\frac{|\iota \hat J^\eps-\iota J^\eps|(Q_{j_0,\alpha})}{|j_0|}\right)m^\eps(x)+\frac{\frac12\sum_{y\sim x}|\hat J^\eps-J^\eps|(x,y)|x-y|}{|j_0|}.
    \end{align*}
    Then for every $\xi>0$
    \begin{align*}
        F_\eps(\hat m^\eps,\hat J^\eps,Q_{j_0,\alpha})=&\sum_{(x,y)\in Q_{j_0,\alpha}}\sigma_\omega(x,y)|x-y|^2\frac{|\hat J^\eps|^2(x,y)}{\theta(\hat m^\eps(x),\hat m^\eps(y))}\\
        \leq &(1+\xi)\sum_{(x,y)\in Q_{j_0,\alpha}}\sigma_\omega(x,y)|x-y|^2\frac{|J^\eps|^2(x,y)}{\theta(\hat m^\eps(x),\hat m^\eps(y))}\\
        &+ \left(1+\frac1\xi\right)\sum_{(x,y)\in Q_{j_0,\alpha}}\sigma_\omega(x,y)|x-y|^2\frac{|\hat J^\eps-J^\eps|^2(x,y)}{\theta(\hat m^\eps(x),\hat m^\eps(y))}\\
         \leq &(1+\xi)(1+2C\alpha)\sum_{(x,y)\in Q_{j_0,\alpha}}\sigma_\omega(x,y)|x-y|^2\frac{|J^\eps|^2(x,y)}{\theta( m^\eps(x),m^\eps(y))} \\
         &+ \left(1+\frac1\xi\right)|j_0|\sum_{(x,y)\in Q_{j_0,\alpha}}\sigma_\omega(x,y)|x-y|^2\frac{|\hat J^\eps-J^\eps|^2(x,y)}{|x-y||\hat J^\eps-J^\eps|(x,y)}.
    \end{align*}
    Consequently
    \begin{align*}
        F_\eps(\hat m^\eps,\hat J^\eps,Q_{j_0,\alpha})\leq &(1+\xi)(1+2C\alpha)F_\eps(m^\eps,J^\eps,Q_{j_0,\alpha}) + C\left(1+\frac1\xi\right)|j_0||\iota \hat J^\eps-\iota J^\eps|(Q_{j_0,\alpha})\\
        \leq& (1+\xi)(1+2C\alpha)F_\eps(m^\eps,J^\eps,Q_{j_0,\alpha}) + C\alpha\left(1+\frac1\xi\right)|j_0|^2.
    \end{align*}
    If $F_\eps(m^\eps,J^\eps,Q_{j_0,\alpha})\gg |j_0|^2$, then there is nothing to show. If not we optimize in $\xi$ and obtain $\xi=C\sqrt\alpha$. Rearranging the inequality we find for $\alpha$ small enough
     \begin{align*}
       F_\eps(m^\eps,J^\eps,Q_{j_0,\alpha})
        \geq&  (1-C\sqrt\alpha)F_\eps(\hat m^\eps,\hat J^\eps,Q_{j_0,\alpha}) - C\sqrt\alpha|j_0|^2.
    \end{align*}
    Taking the $\liminf$ yields
    \begin{align*}
        \liminf_{\eps\to0}  F_\eps(m^\eps,J^\eps,Q_{j_0,\alpha})
        \geq(1-C\sqrt\alpha)f(j_0)-C\sqrt\alpha|j_0|^2
        \geq f(j_0)-C\sqrt\alpha|j_0|^2,
    \end{align*}
    where we used the lower bound in Lemma \ref{lemma: f}(2).
\end{proof}

\subsection{Proof of the upper bound}
If $A(\rho,j)=\infty$, we only need to show the existence of $m^\eps\in \calP(\cXe)$ with $m^\eps \weakstar \rho$, which is classical, and the existence of $J^\eps \in L^1((0,T);\R^{\cEe}_a)$ with $\iota J^\eps \weakstar j$, which do not have to solve the continuity equation. First, approximate $j$ by an $L^1$-vector field which is piecewise constant on space-time cubes. Then approximate this piecewise-constant vector field with a piecewise-constant uniform representative $J^\eps$ from Definition \ref{def: uniform}. We note here that if $(\rho,j)$ has infinite action but solves the continuity equation distributionally, it is possible to find pairs $(m^\eps,J^\eps)$ which also solve the continuity equation and have finite action.

From now on, we can assume that $A(\rho,j)<\infty$, i.e. that $(\rho,j)$ is a finite action curve.

Our strategy consists of discretizing $(\rho,j)$ in time and space. We then subdivide each time interval $[t_k,t_{k+1}]$ into a flow phase $[t_k,t_{k+1}-\eta h]$ and a maintenance phase $[t_{k+1}-\eta h, t_{k+1}]$.

    During the flow phase, $J^\eps$ will be constant in time, and $m^\eps$ affine. At the end of each flow phase, the mass associated with every cube $z\in \delta\Z^n$ will be the same as the mass associated with $z$ at the start of the next flow phase.

\emph{Step 0:} Without loss of generality, $(\rho,j)$ is a smooth finite action curve with compact support in $[-M/2,M/2]^n$, $M> 0$. In order to approximate a general finite action curve with a smooth compactly supported one, first mollify in time, then orthogonally project onto a large ball in space, then mollify in space. See e.g. \cite[Theorem 8.2.1]{ags} for a detailed argument.

\emph{Step 1:} Discretize the continuity equation in time and space by partitioning $[0,T]$ into intervals $[t_k,t_{k+1}]$ of length $h>0$ and partitioning $\R^n$ into closed cubes $Q(z,\delta)$ of size $\delta>0$ with centers $z \in \delta  \Z^n$, defining the discrete quantities 
\[
\rho_{t_k}^z = \int_{Q(z,\delta)}\rho(t_k,x)\,dx \lesssim \delta^n,
\]
\[
j_{t_k,t_{k+1}}^{z,z'} = \fint_{t_k}^{t_{k+1}}\int_{\partial Q(z,\delta)\cap \partial Q(z',\delta)} j(t,x)\cdot \frac{z'-z}{\delta}\,d\Hm^{n-1}(x)\,dt \lesssim \delta^{n-1},
\]
so that 
\begin{align*}
    \frac{\rho_{t_{k+1}}^z-\rho_{t_k}^z}{h}+\sum_{z'\sim z}j_{t_k,t_{k+1}}^{z,z'}=0 \text{ for all }z\in\delta\Z^n, k= 0,\ldots,T/h-1.
    \end{align*}

\emph{Step 2:} Define flows $v^\eps_{t_k,t_{k+1}}$ on $(\eps\Z^n,n.n.)$ for $\eps \ll \delta$, $\frac{\delta}{\eps}\in 2\N + 1$, through
\[
v^\eps_{t_k,t_{k+1}}(a,b) =
\begin{cases}
\frac{\eps^{n-1}}{\delta^{n-1}} j^z_{t_k,t_{k+1}}\cdot\frac{b-a}{|b-a|}&\text{ if }[a,b]\subseteq Q(z,\delta)\\
\frac{\eps^{n-1}}{\delta^{n-1}} j^{z,z'}_{t_k,t_{k+1}} & \text{ if } a\in Q(z,\delta), b\in Q(z',\delta), z\neq z',
\end{cases}
\]
    where $j_{t_k,t_{k+1}}^{z}= \frac12(j_{t_k,t_{k+1}}^{z,z+\delta e_i} + j_{t_k,t_{k+1}}^{z-\delta e_i,z})_{i=1}^n\in  \R^n$.

Note that all edges are covered by the above two cases, since $\eps \Z^n$ and $\partial Q(z,\delta)$ are disjoint for every $z\in\delta \Z^n$, since the edge length of the cube is an odd multiple of $\eps$. Calculating the divergence of $v^\eps_{t_k,t_{k+1}}$ yields the useful properties
\begin{equation}\label{eq: v CE}
    \sum_{a\in \eps\Z^n \cap Q(z,\delta)} \sum_{b \sim a} v^\eps_{t_k,t_{k+1}}(a,b) = \sum_{z'\sim z}j_{t_k,t_{k+1}}^{z,z'}\end{equation}
     and 
     \begin{equation}\label{eq: v divergence}
    \|\dive \iota v^\eps_{t_k,t_{k+1}}\|_{l^\infty} \leq  \frac{n}{2} \mathrm{Lip}(j)\delta \eps^{n-1}.
\end{equation}

\emph{Step 3:} We define the backbone of our optimal flow piecewise constant in the reduced time intervals $I_k = [t_k,t_{k+1} -\eta h]$, where $\eta\in (0,1)$ is small but not too small. The backbone flow is $\tilde J^\eps_{t} : \bigcup_{k=0}^{T/h - 1}I_k \to \R^{\cEoe}_a$,
\begin{equation}
    \tilde J^\eps_{t} = \frac1{1-\eta}\phi_\eps^\# v^\eps_{t_k,t_{k+1}},\quad t\in I_k,
\end{equation}
where $\phi_\eps:(\eps \Z^n,n.n.) \to (\cXe, \cEe)$ is the graph homomorphism from Lemma \ref{lemma: homo}.

We note that $\iota \tilde J^\eps_t$ has nonzero divergence near the boundaries $\partial Q_{z_0,\delta}$. In order to satisfy the continuity equation, we have to deposit mass at the divergence sites at the beginning of each flow phase, which will then change affinely throughout the flow phase. 

The depot vertices are defined as $\phi_\eps(D_{\delta, \eps})\subset \cXe$, where
\begin{equation}
D_{\delta,\eps} = \left\{a\in \eps\Z^n\,:\, \dist\left(a, \bigcup_{z\in \delta \Z^n} \partial Q(z,\delta)\right) \leq \frac{\eps}{2} \right\},
\end{equation}
see Figure \ref{fig:depot}.

We define the depot mass distributions $m^\eps_{\mathrm{depot}, z} :\bigcup_k I_k \to  [0,\infty)^{\cXe}$, supported on the depots $\phi_\eps(D_{\delta,\eps})$, as 
\begin{equation}
m^\eps_{\mathrm{depot},z}(t,\cdot) = \phi_\eps^\# \left( \left(\alpha \delta \eps^{n-1} \mathds{1}_{D_{\delta,\eps}} - (t-t_k) \dive v^\eps_t \right) \mathds{1}_{Q(z,\delta)}\right), \text{ for }t\in I_k.
\end{equation}

\begin{figure}[h!]
  \centering
  \begin{subfigure}[b]{0.4\linewidth}
    \includegraphics[width=\linewidth]{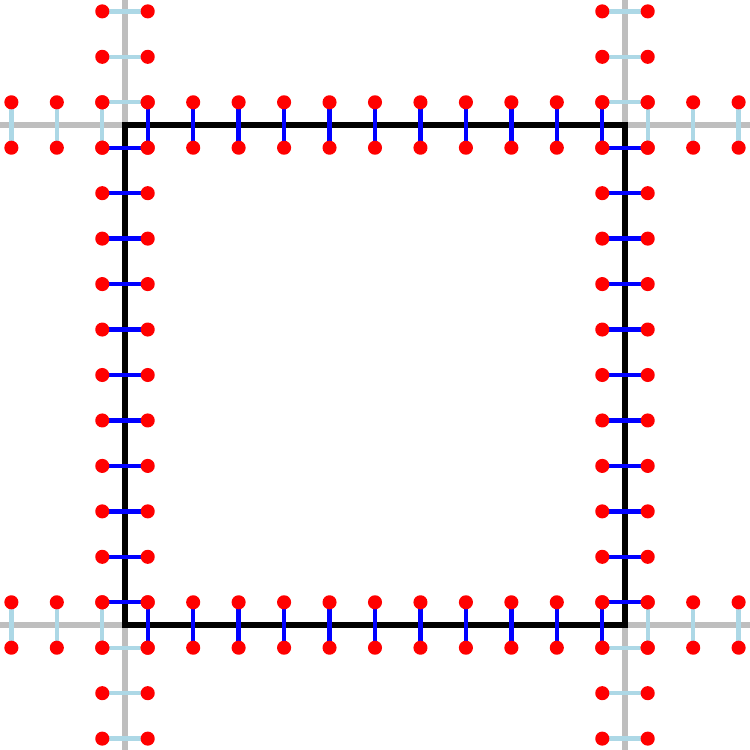}
    \caption{}
  \end{subfigure}
  \hspace{1cm}
  \begin{subfigure}[b]{0.4\linewidth}
    \includegraphics[width=\linewidth]{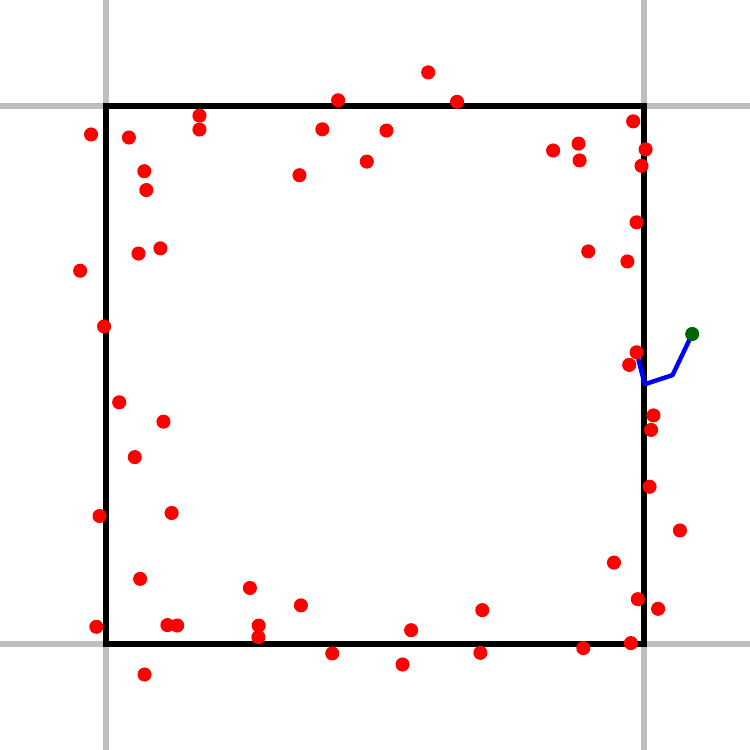}
    \caption{}
  \end{subfigure}
  \caption{(A) All depot vertices $D_{\delta,\eps}$ in $\eps\Z^n$, with interfacial edges. (B) Depot vertices $\phi_\eps(D_{\delta,\eps})$ in the random graph (only the depots associated with a single cube $Q_{z_0,\delta}$), with a single interfacial path.}
  \label{fig:depot}
\end{figure}

Here we require $\alpha\geq h \frac{n}{2} \Lip(j)$, so that $m^\eps_{\mathrm{depot},z} \geq 0$ for all $x\in\cXe$ and all times $t\in \bigcup_k I_k$ by \eqref{eq: v divergence}. In other words, depot masses are standardized at the beginning of each flow phase $t=t_k$ and then filled or emptied gradually throughout the flow phase.

We see that the pair $(\sum_{z \in \delta \Z^n \cap [-M,M]^n} m^\eps_{\mathrm{depot},z}, \tilde J^\eps)$ solves \eqref{eq: dcont} on $\bigcup_k I_k$ by our choice of $\partial_t m^\eps_{\mathrm{depot},z}(t,\cdot)$. The total mass of $\sum_{z \in \delta \Z^n \cap [-M,M]^n} m^\eps_{\mathrm{depot},z}$ is  bounded by the small number $n\alpha (2M)^n$.

Additionally, we have the identity
\begin{equation}\label{eq: time gaps}
    \sum_{x\in\cXe} (m^\eps_{\mathrm{depot},z}(t_{k+1}-\eta h,x) - m^\eps_{\mathrm{depot},z}(t_k,x)) = \rho^z_{t_{k+1}} -\rho^z_{t_k} 
\end{equation}
for all $k=0,\ldots,T/h-1$ and all $z\in \delta \Z^n$, which follows from \eqref{eq: v CE}.

\emph{Step 4:} Since $\tilde J^\eps_t$ is piecewise constant in time and equal to $\frac{1}{1-\eta}\calJ^\eps_{j^z_{t_k,t_{k+1}}}$ on the slightly smaller cube $q_z = Q(z,\delta - 3R\eps)$, we may replace $\tilde J^\eps_t$ by an optimal microstructure on all edges intersecting $q_z$ without incurring additional divergence!

More specifically, choose optimal pairs $(m^\eps_{z,t_k,t_{k+1}}, J^\eps_{z,t_k,t_{k+1}})$ in the cell formula \eqref{eq: cell} for $(\rho^z_{t_k}, \frac{1}{1-\eta}j^z_{t_k,t_{k+1}})$ in the cube $q_z$.

In particular, we have 
\begin{equation*}
\lim_{\eps \to 0} F(m^\eps_{z,t_k,t_{k+1}}, J^\eps_{z,t_k,t_{k+1}}, q_z) = \Lm^n(q_z)f\left(\frac{\frac{1}{1-\eta}j^z_{t_k,t_{k+1}}}{\rho^z_{t_k}}\right)\rho^z_{t_k}
\end{equation*}

We glue these flows together to create the global flow:
\begin{equation}
    J^\eps_t(x,y) = \begin{cases}
    J^\eps_{z,t_k,t_{k+1}}(x,y) &\text{ if }t\in I_k, (x,y) \cap q_z \neq \emptyset\\
    \tilde J^\eps_{t_k,t_{k+1}}(x,y) &\text{ if }t\in I_k, (x,y) \cap q_z = \emptyset\text{ for all }z\in \delta \Z^n.
    \end{cases}
\end{equation}

We also define the global mass distribution
\begin{equation}
m^\eps_t(x) = \sum_{z\in\delta \Z^n \cap [-M,M]^2} (m^\eps_{z,t_k,t_{k+1}}(x) + m^\eps_{\mathrm{depot}, z}(t,x)) + m^\eps_\mathrm{path}(x), \text{ for }t\in I_k,
\end{equation}
where $m^\eps_\mathrm{path}\in [0,\infty)^\cXe$ is an error term independent of time and will be specified later. Note that $(m^\eps, J^\eps)$ still solve \eqref{eq: dcont} on $\bigcup_k I_k$, since we created no additional time derivative and no additional divergence.

Using \eqref{eq: time gaps} and the fact that $\sum_{x\in\cXe} m^\eps_{z,t_k,t_{k+1}}(x) = \rho^z_{t_k}$, we arrive at
\begin{equation}\label{eq: masses}
  \sum_{x\in \cXe}  m^\eps_{z,t_k,t_{k+1}}(x) + m^\eps_{\mathrm{depot}, z}(t_{k+1}-\eta h,x) =     \sum_{x\in \cXe}  m^\eps_{z,t_{k+1},t_{k+2}}(x) + m^\eps_{\mathrm{depot}, z}(t_{k+1},x).
\end{equation}
In other words, the mass associated with $z$, which is located in $B(Q(z,\delta),R\eps)$, does not change between $t_{k+1}-\eta h$ and $t_{k+1}$! This will allow us to cheaply bridge the gap as long as $\delta^2 \ll \eta h^2$:

We define $m^\eps_\mathrm{path}\in [0,\infty)^\cXe$ as
\begin{equation}
    m^\eps_\mathrm{path}(x) = \begin{cases}
    0 &x\in q_z \text{ for any }z\in\delta \Z^n,\\
    \sum_{y\sim x}|y-x||\calJ^\eps(x,y)| & x\in [-M,M]^n\setminus \bigcup_{z\in\delta \Z^n} q_z
        \end{cases}
\end{equation}
The total mass of $m^\eps_\mathrm{path}$ is bounded by 
\begin{align*}
    \sum_{x\in\cXe}m^\eps_\mathrm{path}\leq |\iota\calJ^\eps|([-M,M]^n\setminus\bigcup_{z\in\delta\Z^n} q_z)\leq CM^n\eps.
\end{align*}

\emph{Step 5:} We fill in the gaps $[0,T]\setminus \bigcup_k I_k=\bigcup_{k=0}^{T/h-1}[t_{k+1}-\eta h, t_{k+1}]$. 
Using \eqref{eq: masses} we find for all $k=0,\ldots, T/h-1$
\begin{align*}
    W_\infty(m^\eps_{t_{k+1}-\eta h},m^\eps_{t_{k+1}})\leq 2\sqrt{n}\delta.
\end{align*}
According to the a-priori bound on the discrete action \cite[Lemma 3.3]{gladbach2020scaling}
there exist curves $(m^\eps_t,J^\eps_t)$ on the intervals $(t_{k+1}-\eta h,t_{k+1})$ with 
\begin{align*}
\int_{t_{k+1}-\eta h}^{t_{k+1}}F_\eps(m^\eps_t,J^\eps_t)\, dt
\leq C \frac{(W_\infty(m^\eps_{t_{k+1}-\eta h},m^\eps_{t_{k+1}})+\eps)^2}{\eta h} \leq C\frac{\delta^2}{\eta h}.
\end{align*}
If $\delta^2\ll \eta h^2$, the total contribution from $\frac{T}{ h}$ maintenance intervals is small.

\emph{Step 6: }
Let $t\in I_k$. We estimate 
\begin{align*}
    F_\eps(m^\eps_t,J^\eps_t)
    \leq&\sum_{z\in \delta \Z^n}\sum_{(x,y)\cap q_z\neq\emptyset}\sigma(x,y)|x-y|^2\frac{J^\eps_{z,t_k,t_{k+1}}(x,y)^2}{m^\eps_{z,t_k,t_{k+1}}(x)}\\
    & +\sum_{(x,y)\cap q_z=\emptyset}\sigma(x,y)|x-y|^2\frac{\tilde J^\eps_{t_k,t_{k+1}}(x,y)^2}{m^\eps_{\mathrm{path}}(x)}\\
    \leq &\sum_{z\in\delta\Z^n}F(m^\eps_{z,t_k,t_{k+1}}, J^\eps_{z,t_k,t_{k+1}}, q_z) + \sum_{(x,y)\cap q_z=\emptyset}\sigma(x,y)|x-y||\tilde J^\eps_{t_k,t_{k+1}}(x,y)|\|j\|_\infty\\    
    \leq &\sum_{z\in\delta\Z^n} F(m^\eps_{z,t_k,t_{k+1}}, J^\eps_{z,t_k,t_{k+1}}, q_z) +C |\iota\calJ^\eps|([-M,M]^n\setminus\bigcup_{z\in\delta\Z^n} q_z) \|j\|_\infty^2 \\
    \leq &\sum_{z\in\delta\Z^n} F(m^\eps_{z,t_k,t_{k+1}}, J^\eps_{z,t_k,t_{k+1}}, q_z) + C \eps \|j\|_\infty^2 M^n.
\end{align*}

The total action on $(0,T)$ can be estimated by
\begin{equation}
   \lim_{h\to 0}\lim_{\eta \to 0} \lim_{\delta \to 0} \lim_{\eps \to 0} \int_0^T F_\eps(m^\eps_t,J^\eps_t)\, dt \leq \int_0^T \int_{\R^n} f(\rho(t,x),j(t,x))\, dx \, dt.
\end{equation}

Taking a suitable diagonal sequence $h(\eps),\eta(\eps),\delta(\eps) \to 0$ yields the desired sequence $(m^\eps,\iota J^\eps) \weakstar (\rho,j)$ with $\lim_{\eps \to 0} F_\eps(m^\eps_t,J^\eps_t)\, dt = \int_0^T \int_{\R^n} f(\rho(t,x),j(t,x))\, dx \, dt$, where we use the lower bound to obtain equality. In addition, $\max_{t\in[0,1]} W_2(m^\eps_t,\rho_t) \to 0$.

\appendix
\section{}\label{appendix}

\begin{lemma}\label{lemma: star}
   Let $\eta$ be a function in $C^0(\R^n)$ and $J^\eps$ a sequence of vector fields in $\R^{\cEe}_a$. We define their product $\eta \pentagram J\in \R^{\cEe}_a$  by
\begin{align*}
    (\eta\pentagram J) (x,y)=\frac{\eta(x)+\eta(y)}{2}J(x,y).
\end{align*}
Then:
    \begin{enumerate}
       
      \item Modulation:
      Let $ \iota J^\eps\weakstar \nu$ vaguely. Then
      \begin{align*}
       \iota(\eta \pentagram J^\eps) \weakstar \eta \nu \text{ vaguely.}
      \end{align*}
      \item Convergence of divergence:
        Let $\eta\in C^1(\R^n)$. Let $J^\eps\weakstar \nu$ and $\dive \iota  J^\eps\weakstar \dive \nu$ vaguely. Then
\begin{align*}
    \dive \iota (\eta\pentagram J^\eps) \weakstar \dive (\eta \nu) \text{ vaguely.}
\end{align*}
    \end{enumerate}
    
\end{lemma}

\begin{proof}
    The first assertion is a standard quadrature argument. For the second, take a test function $\varphi\in C_c^0(\R^n)$. Then applying the fundamental theorem of calculus to $\eta$ on each line segment $[x,y]$ yields

\begin{align*}
    \langle \dive\iota(\eta \pentagram J^\eps), \varphi \rangle =& \sum_{x\in\cXe} \sum_{y\sim x} \frac{\eta(y) + \eta(x)}{2} J^\eps(x,y) \varphi(x) \\
     = &  \sum_{x\in\cXe} \sum_{y\sim x} \eta(x) J^\eps(x,y) \varphi(x)\\
     &+\frac12 \sum_{x\in \cXe} \sum_{y\sim x}  (\eta (y)-\eta(x)) J^\eps(x,y)\varphi(x)\\
     = & \langle \dive \iota J^\eps, \eta\varphi \rangle\\
     &+\frac12 \sum_{x\in \cXe} \sum_{y\sim x} \int_{[x,y]} \nabla \eta (z) \cdot \frac{y-x}{|y-x|}J^\eps(x,y)\varphi(x)\,d\Hm^1(z)\\
     =& \langle \dive \iota J^\eps, \eta\varphi \rangle\\ 
     & +\sum_{(x,y)\in \cEe} \int_{[x,y]}  \frac{\varphi(x) + \varphi(y)}{2} \nabla \eta (z) \cdot \frac{y-x}{|y-x|}J^\eps(x,y)\,d\Hm^1(z)\\
     = & \langle \dive \iota J^\eps, \eta\varphi \rangle
 +\langle \iota(\varphi \pentagram J^\eps), \nabla \eta \rangle \longrightarrow_{\eps\to 0} \langle \dive \nu, \eta\varphi \rangle
+ \langle \varphi \nu, \nabla \eta\rangle\\
=& \langle \dive \nu, \eta\varphi \rangle + \langle \nabla \eta \cdot \nu, \varphi \rangle= \langle \dive (\eta\nu),\varphi \rangle,
\end{align*}
where we used (1).
   
\end{proof}

\begin{lemma}\label{lemma: KR}
Let  $\sigma\in \M(\R^n)$ and $(\sigma_n)\subset\M(\R^n)$ such that $\sigma_n\weakstar\sigma$ as $n\to\infty$ and $\ip{\sigma_n,1}=0$ for all $n\in\N$.
Assume there exists a compact set $K\subset\R^n$ such that $\supp\sigma_n\subset K$ for all $n\in\N$.
   Then $\|\sigma_n-\sigma\|_{\KR}\to0$ as $n\to\infty$.
\end{lemma}
\begin{proof}
    Let $\phi_n\in C_c(\R^n)$ be $1$-Lipschitz with $\|\sigma_n-\sigma\|_{\KR}=\ip{\sigma_n-\sigma,\phi_n}$. Since $\ip{\sigma_n,1}=0$ and $\supp\sigma_n\subset K$, we may assume that $\phi_n(0)=0$ by adding constants. By the classical Arzel\`a-Ascoli theorem, there exists a $\phi\in C_b(\R^n)$ such that $\phi_n\to\phi$ uniformly in $K$. We rewrite
    \begin{align*}
        \ip{\sigma_n-\sigma,\phi_n}=\ip{\sigma_n-\sigma,\phi}+\ip{\sigma_n-\sigma,\phi_n-\phi}
    \end{align*}
    and notice that by the narrow convergence of $\sigma_n$ we have $\ip{\sigma_n-\sigma,\phi}\to0$. The second term we bound by employing the total variation
    \begin{align*}
        |\ip{\sigma_n-\sigma,\phi_n-\phi}|\leq (|\sigma_n|(K)+|\sigma|(K))\sup_{x\in K}|\phi_n(x)-\phi(x)|.
    \end{align*}
    By Riesz theorem \cite[Theorem 1.54]{ambrosio2000functions} and the uniform boundedness principle \cite[Theorem 2.11]{rudin1991functional} we have that $|\sigma_n|(K)+|\sigma|(K)$ is uniformly bounded in $n$, which implies that $\ip{\sigma_n-\sigma,\phi_n-\phi}\to 0$ since $\phi_n\to\phi$ uniformly in $K$. All together, this implies that  $\|\sigma_n-\sigma\|_{\KR}\to0$ as $n\to\infty$.
\end{proof}

\bibliographystyle{siam}
{\bibliography{stoch_hom_GK}}
\end{document}